\newtheorem{thm}{Theorem}[section]
\newtheorem{lem}[thm]{Lemma}
\newtheorem{prop}[thm]{Proposition}
\newtheorem{defn}[thm]{Definition}
\newtheorem{rem}[thm]{Remark}
\newtheorem{exm}{Example}
\newtheorem{ques}[thm]{Question}
\numberwithin{equation}{section}
\begin{document}

\title{\bf{\Large Weak embedding theorem and a proof of cycle double cover of bridgeless graphs}}

\author{{\bf Bin Shen} }

\date{}

\maketitle

\begin{quote}
\small {\bf Abstract}.  In this article, 
we give a positive answer to the cycle double cover conjecture. Ones who are mainly interesting in the proof of the conjecture can only read Sections 2 and 4.
\end{quote}
\begin{quote}
\small {\bf Mathematics Subject Classification}: 05C38 05C45\\
\small {\bf Keywords}: cycle double cover conjecture; bridgeless graph; embedding theorem.
\end{quote}

\baselineskip 17pt

\section{Introduction}

Cycle double cover is an unsolved problem in graph-theoretic mathematics, which was raised by W.T. Tutte\cite{Tutte}, G. Szekeres\cite{Szek} and P.D. Seymour\cite{Sey}.
The conjecture queries whether every bridgeless undirected graph has a cycle double cover, that is, a collection of cycles so that each edge of the graph is contained in exactly two of the cycles.

As is well known, the most interesting results is the observation of F. Jaeger\cite{Jae}. In any potential minimal counterexample of the cycle double cover conjecture,
all vertices must have three or more incident edges. Moreover, if a vertex $v$ has four or more incident edges,
one may ``split off" two of those edges by removing them from the graph and replacing them with a single edge connecting their other two endpoints,
while preserving the bridgelessness of the resulting graph. On the other hand,
a double cover of the resulting graph may be extended in a straightforward way to a double cover of the original graph. More precisely,
every circle of the split off graph corresponds either to a cycle of the original graph, or to a pair of cycles meeting at $v$. Thus,
every minimal counterexample must be cubic. So they defined a \emph{snark} to be a bridgeless graph,
with the additional properties that every vertex has exactly three incident edges and that it is not possible to partition the edges of the graph into three perfect
matchings. The former property implies that the graph is cubic, while the latter means that the graph has no 3-edge-colorable bridgeless cubic graph.

Another important approach to the conjecture is to embed such a bridgeless graph into an oriented two dimensional topological manifold, namely, a surface.
One may hope that the boundary curves of faces which covers this manifold containing the graph might give the double cover,
since the target topological manifold is oriented. But it fails because some of the boundaries are not cycles of the graph.
It implies that the cycle assumption is more strict than the boundaries of a topological division on the oriented manifold. On the other hand, cycles also can not be used as
the boundaries of some pieces to divide the target manifold. This fact is explained in Section 3.
\\

In this paper, we first discuss the weak embedding theorem,
which claims that every bridgeless graph can be embedded into a two dimensional compact oriented topological manifold with a precise Euler characteristic.
After that, we provide a positive answer to the cycle double cover conjecture of a bridgeless graph.
\begin{thm}\label{main theorem}
Every bridgeless graph admits a cycle double cover.
\end{thm}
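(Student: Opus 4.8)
\medskip
\noindent\textbf{Proof proposal.}
The plan is to reduce Theorem~\ref{main theorem} to the cubic case and then to read off a cycle double cover from the faces of a suitable embedding. First I would invoke the reductions recalled in the introduction: by Jaeger's splitting-off argument a minimal counterexample may be taken to be cubic, and since a bridgeless graph decomposes into $2$-connected blocks whose cycle double covers combine to one of the whole graph, it suffices to produce a cycle double cover of an arbitrary $2$-connected cubic graph $G$.

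Next I would apply the weak embedding theorem to obtain a $2$-cell embedding of $G$ in a compact orientable surface $S$ with the Euler characteristic it prescribes. Because $S$ is orientable, fixing an orientation makes each facial boundary walk a closed walk that uses every dart of $G$ exactly once; hence each edge of $G$ occurs exactly twice in the list of facial walks, and since $G$ is bridgeless it occurs in two \emph{distinct} facial walks. Therefore the multiset of facial boundary walks is automatically a double cover of $E(G)$; it is a \emph{cycle} double cover as soon as one knows that no facial walk repeats a vertex. (For cubic $G$ one has $2|E|=3|V|$, so the number of faces is $F=\chi(S)+\tfrac12|V|$, and if all facial walks were cycles then $F\le|V|$, i.e.\ $\chi(S)\le\tfrac12|V|$; so the weak embedding theorem must at least deliver an $S$ with $\chi(S)$ in this range, and the content of Section~4 is that its specific value does more.)

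The crux is to upgrade ``embedding in $S$ with the prescribed $\chi$'' to ``every face is a cycle'', i.e.\ to show the embedding furnished by the weak embedding theorem is a circular (strong) embedding. Two lines of attack suggest themselves. One is to show the prescribed value of $\chi(S)$ is extremal among orientable embeddings of $G$ and that extremality forces cyclicity: a facial walk that revisits a vertex $v$ could be ``blown up'' along $v$ or along its incident face into an embedding of the same or larger genus but with strictly more faces, contradicting extremality. The other, more hands-on, is a surgery loop: given a facial walk visiting $v$ twice, cut $S$ along a properly embedded arc through the interior of that face joining the two occurrences of $v$, replacing the offending face by two faces; one then checks that $2$-connectedness and bridgelessness are preserved, and that a monovariant built from $\chi(S)$ (guaranteed finite by the weak embedding theorem) strictly decreases, so that after finitely many steps all faces are cycles.

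I expect this third step to be the genuine obstacle. The weak embedding theorem only controls a topological invariant of $S$, whereas ``all faces are cycles'' is a property of the combinatorial map, and vastly many rotation systems realize the same Euler characteristic; forcing the facial walks to be cycles is, in effect, the orientable strong embedding conjecture, and that is precisely what Section~4 must extract from Section~2. A secondary danger in the surgery approach is that cutting a face open may create a cut vertex or even a bridge, so the bookkeeping that keeps $G$ inside the class to which the weak embedding theorem applies---together with the proof that the process cannot stall---is where I would expect the real difficulty to concentrate.
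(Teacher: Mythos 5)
Your plan does not follow the paper's argument, and it stalls exactly where you predict it will. The paper itself devotes Section 3 to showing that the facial-walk approach fails: when $K(5)$ is embedded in the torus with the Euler characteristic given by the weak embedding theorem, one of the five faces has boundary $v_3v_2v_4v_5v_2v_3v_5v_4$, which traverses the edges $v_4v_5$ and $v_2v_3$ twice and hence is not a cycle; the paper further observes that the genuine cycle double cover of $K(5)$ cannot be realized as a set of facial boundaries at all. So the weak embedding theorem gives you no purchase on circularity, and your third step --- upgrading the embedding so that every face is a cycle --- is precisely the orientable strong (circular) embedding conjecture, an open problem at least as strong as cycle double cover. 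Neither of your proposed repairs closes this gap: extremality of $\chi(S)$ among orientable embeddings does not force facial walks to be cycles (the $K(5)$ embedding is genus-minimal and still has a non-cyclic face), and the cut-along-an-arc surgery changes the surface, so the monovariant you want to run on $\chi(S)$ is not controlled by the weak embedding theorem, which only asserts the existence of some embedding of bounded genus, not anything about the one you are modifying.

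The paper's actual proof is combinatorial and uses the embedding picture only as motivation. It forms two disjoint copies $G_1$, $G_2$ of $G$, joins each pair of corresponding odd vertices by an auxiliary edge to obtain an Eulerian graph $\tilde G$, takes an Eulerian circuit $\tilde E$, and projects it back to $G$, where it is an edge double cover by closed walks. The work of Section 4 is then to decompose this projection into cycles plus doubly-traversed pieces (segments, forks, branches) and to iteratively reroute the doubly-traversed segments through neighbouring cycles until only cycles remain. Whether that iteration is itself sound is a separate question, but it is an entirely different mechanism from yours: at no point does it require any face of any embedding to be a cycle, which is the step your proposal leaves open.
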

The details are given in Section 4.

\section{Some concepts}
We call an undirected graph short for graph and denote it by $G$. Vertices of $G$ are given by $v_i$ for some $i$.
Any edge between two vertices $v_i, v_j$ is denoted by $e_{ij}$, or sometimes by $v_iv_j$. The number of incident edges of a vertex $v_i$ is called \emph{degree} and is denoted by $d_i$.
A vertex $v_{\alpha}$ is called an \emph{odd vertex} if its degree $d_{\alpha}$ is odd, and is called an \emph{even vertex} if $d_{\alpha}$ is an even nonzero integer.

A graph $G$ is said to be \emph{complete}, if every two vertices in $G$ have a edge. A complete graph is totally determined by the number of its vertices.
Therefore, a complete graph is always denoted by $K(n)$ if it has $n$ vertices.

Moreover, there are several terminologies need to explain.

\emph{Walk}: A walk in a graph can pass some edges more than once from a vertex to a vertex.

\emph{Closed walk}: A closed walk is a walk which is from a vertex to itself.

\emph{Trail}: A trail is a walk that can not pass any edge twice, but can pass some vertices more than once.

\emph{Circuit}: A circuit is a closed trail, namely, with the same starting and ending vertex.

\emph{Path}: A path is a trail which can not pass any vertex twice.

\emph{Cycle}: A cycle is a closed path.

\emph{Fork}: A fork is a special closed walk that pass any edge exactly twice. In a fork, the vertices passed once are called the \emph{ending vertices}, and the other vertices are called the \emph{inner vertices}. The vertices passed more than twice are called \emph{bifurcation vertices}.

\emph{Segment}: A segment is a special fork that pass each vertex no more than twice.

A segment is one-to-one corresponding to a path, since it can be represented by
$$s=v_1v_2\cdots v_{k-1}v_kv_{k-1}\cdots v_2v_1.$$
The vertices $v_1,v_k$ are called the \emph{ending vertices} of the segment $s$.

A closed walk may pass some edges more than twice. So a walk $w$ is a combination of its edges, and some edges may repeatedly appear in the combination. We denote the edge-set of a walk $w$ by $e(w)$, and denote the induced graph of $e(w)$ by $G[e(w)]$. We call $G[e(w)]$ the \emph{induced graph of $w$}. Furthermore, if $W$ is a set of walks, we define the edge-set of $W$, which is denoted by $e(W)$, is the minimal set that contains $e(w)$ for every $w\in W$. The \emph{induced graph of $W$} is defined to ge the induced graph of $e(w)$, and is denoted by $G[e(W)]$.

Given a set $A$ of circuits (or segments, or forks, respectively), we can connect two elements in the set together to get a larger circuits (or segments, or forks, respectively), and replacing the original two elements by the new one. Continuing this process to get a new set $B$ of circuits (or segments, or forks, respectively), such that any two element in $B$ can not be connected to form a larger new circuits (or segments, or forks, respectively). Then we call the element in $B$ is \emph{maximal}. We call a circuit or a segment is \emph{irreducible}, if the circuit or a segment does not contain any other such structure. More precisely, an irreducible circuit is a cycle. An irreducible segment is a segment expressed by $v_1v_2v_1$, namely, whose corresponding path has length one. We call a subgraph or a subset is \emph{proper}, if it is not the empty or the graph or the set itself.

The above terminologies are all used to describe graphs. However, one may consider such a (closed) walk (a circuit, a segment, a fork, etc.) as a sequence of vertices, which give the corresponding graph a special order. This order implies a possible way to pass each edge. It could not cause any ambiguity, if we focus on the graphs and the covering sheets (one or two-sheets) of each edge. For example, sometimes we say that ``a closed walk consists of some cycles and forks". It means that there is a decomposition of the induced graph of the walk into some cycles and trees (which is corresponding to the forks), such that it passes each edge of the graphs of cycles once and it pass each edge of the trees twice. When we say ``Connecting two segments together", the phrase means to connect the corresponding paths of two segments together and adopting a walk passing each edge exactly twice.

When we mention the ``containing", we are discussing in the level of ``set of edges". For example, a circuit $r_2$ containing another circuit $r_1$ means that the edges of $r_1$ is a subset of the edges of $r_2$. A segment $s$ being contained in a fork $f$ means that the edges of $s$ is a subset of the edges of $f$, and each edge of $s$ is exactly passed twice in $f$. A fork $f$ containing a cycle $r$ means that the edges of $r$ is a subset of the fork $f$. Each edge of $r$ is passed once in $r$, while twice in $f$.

\section{Weak embedding theorem}

One may wonder if we shall embed a given bridgeless graph into a manifold. If so, we can cut the manifold into pieces along the edges of the graph.
Then the boundaries of these pieces can provide a double cover of the graph. But the geometric plan can not solve the conjecture.
We will first prove the weak embedding theorem and then give a counterexample to illustrate the failure of this method. However,
it gives us some important thoughts. Firstly, we deduce the following conclusion.
\begin{lem}
Any complete graph can be embedded into a two-dimensional oriented compact topological manifold.
\end{lem}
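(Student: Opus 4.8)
The plan is to trade crossings for handles, which is the classical way to place an arbitrary finite graph on a closed orientable surface. First I would fix a straight-line drawing of $K(n)$ in the plane with the $n$ vertices in generic position, so that no three vertices are collinear and no three edges pass through a common point; then the edges meet only in finitely many transversal crossing points, say $c$ of them. Compactifying the plane to the sphere $S^{2}$, we view this as a drawing of $K(n)$ on $S^{2}$ having exactly $c$ crossings and no other singularities.

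Next I would eliminate the crossings one at a time by surgery on $S^{2}$. At a crossing $p$ of edges $a$ and $b$, delete the interiors of two small disjoint disks $D_{a},D_{b}$, where $D_{a}$ is a neighbourhood of a sub-arc of $a$ near $p$ and $D_{b}$ a neighbourhood of a sub-arc of $b$ near $p$, and glue an annulus $S^{1}\times[0,1]$ onto the two boundary circles thus created, re-routing the deleted piece of $a$ across this handle so that it no longer meets $b$. The result is again a closed surface; gluing the annulus compatibly with a fixed orientation of $S^{2}\setminus(D_{a}\cup D_{b})$ keeps it orientable; and it stays compact since we only attach a compact piece along circles. After carrying this out for all $c$ crossings we obtain a closed oriented surface $\Sigma$ — a sphere with $c$ handles, of Euler characteristic $2-2c$ — into which $K(n)$ is embedded without crossings.

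For completeness I would note the equivalent rotation-system viewpoint: choosing an arbitrary cyclic order of the incident edge-ends at each vertex of $K(n)$ turns it into a ribbon (band) graph, whose associated surface is automatically closed and orientable and contains $K(n)$ as its spine, the genus being read off from the number of face-tracing orbits via Euler's formula. Either construction yields the lemma, and both adapt verbatim to an arbitrary finite graph, not just a complete one.

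I do not expect a genuine obstacle, since the statement is standard; the only points that need care are verifying that one handle-addition really removes a crossing while preserving orientability and compactness, and observing that a finite graph admits a drawing with only finitely many crossings so that finitely many surgeries suffice. If one wants the best possible surface one can instead invoke the Ringel--Youngs value $\gamma(K(n))=\lceil (n-3)(n-4)/12\rceil$, but this refinement is not needed for the sequel.
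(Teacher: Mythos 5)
Your proof is essentially correct, but it takes a genuinely different route from the paper. The paper argues by induction on the number of vertices: assuming $K(k)$ is already embedded, it adds the new vertex $v_{k+1}$ together with its $k$ incident edges, places three of them in an existing face, and attaches one handle for each of the remaining $k-3$ edges; this is what lets the paper extract the explicit genus bound $\tfrac{(k-3)(k-4)}{2}$ in the theorem that follows. You instead start from a generic straight-line drawing in the plane, compactify to $S^{2}$, and kill the $c$ crossings one at a time by handle surgery, obtaining genus $c$; this is more uniform, needs no induction on the graph's structure, and applies verbatim to any finite graph (as does your rotation-system remark), but it gives a much weaker genus bound for $K(n)$ (on the order of $\binom{n}{4}$ crossings for a generic straight-line drawing) and so would not by itself yield the quantitative refinement the paper wants next. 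One small correction to your surgery step: the two deleted disks should both be centred on the edge $a$, one on each side of the crossing point $p$ and each disjoint from $b$, so that the re-routed sub-arc of $a$ leaves $a$, runs over the tube, and returns to $a$ beyond $p$; as written you place one disk along $a$ and one along $b$, which would force the detoured arc to land near $b$ rather than back on $a$. With that adjustment the argument is the standard one and is sound; since the lemma only asks for existence of some closed orientable surface, either approach suffices.
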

\begin{proof}
The proof is quite obvious. Let us see some simple examples with less vertices first.

If the complete graph has 4 vertices, the number of edges is 6. It can be embedded into a sphere, by considering the graph as a tetrahedron.

If the complete graph has 5 vertices. The number of edges is 10. It can not be embedded into a sphere, but into a torus.

Similarly, we can use the genuses to separate the new edges. As a result,
we adopt the induction on the number of vertices and use multi-connected parts to locate the new edges,
which is the incident edges of the $n-$th vertex to the $(n-1)-$th complete graph. The lemma has been obtained.
\end{proof}

Now we want to give the precise genuses, or Euler characteristics of the target topological manifolds. That is

\begin{thm}\label{thm1}
Any complete graph $K(k)$, with $k\geq3$, can be embedded into a two dimensional oriented compact topological manifold with genus of $\frac{(k-3)(k-4)}{2}$,
or with Euler characteristic of $2-(k-3)(k-4)$. Hence any graph with $k$ vertices must can be embedded into such a manifold.
\end{thm}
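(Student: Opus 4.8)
The plan is to prove the genus estimate for the complete graphs $K(k)$ by induction on $k$, and then deduce the statement for an arbitrary $k$-vertex graph by passing to a subgraph. For the base cases $k=3$ and $k=4$ we have $\frac{(k-3)(k-4)}{2}=0$, and indeed $K(3)$ (a triangle) and $K(4)$ (the $1$-skeleton of a tetrahedron) embed cellularly in the sphere; the torus embedding of $K(5)$, for which $\frac{(5-3)(5-4)}{2}=1$, is a convenient consistency check. Throughout the induction I will carry the extra invariant that the embedding produced is \emph{cellular}, so that every face is an open disc bounded by a closed walk.

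For the inductive step, suppose $K(k-1)$ is cellularly embedded in an oriented compact surface $S$ of genus $g_{k-1}=\frac{(k-4)(k-5)}{2}$. Since every vertex of $K(k-1)$ has degree at least two (and $K(k-1)$ is $2$-connected for $k-1\ge 3$), at least one facial walk of this embedding visits three or more distinct vertices; fix such a face $F$ and three vertices $v_{i_1},v_{i_2},v_{i_3}$ of $K(k-1)$ on $\partial F$. Place the new vertex $v_k$ in the interior of $F$ and join it by three disjoint arcs inside $F$ to $v_{i_1},v_{i_2},v_{i_3}$; this subdivides the disc $F$ into three sub-discs and creates no new handle. For each of the remaining $k-1-3=k-4$ vertices $v_i$ that are not yet adjacent to $v_k$, choose a face incident to $v_i$ and a face incident to $v_k$, attach a single handle joining these two faces, and route the edge $v_kv_i$ across that handle; performed one at a time, each such attachment raises the genus by exactly $1$, keeps the embedding cellular, and leaves at least one face still incident to $v_k$ for the next attachment. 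After all $k-4$ attachments the surface has genus
\[
g_{k-1}+(k-4)=\frac{(k-4)(k-5)}{2}+(k-4)=\frac{(k-4)(k-3)}{2}=\frac{(k-3)(k-4)}{2},
\]
and it contains an embedded (indeed cellular) copy of $K(k)$. The Euler characteristic is $2-2\cdot\frac{(k-3)(k-4)}{2}=2-(k-3)(k-4)$.

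Finally, the last sentence is immediate: any graph $H$ on $k$ vertices is obtained from a subgraph of $K(k)$ by relabeling, so deleting from the above embedding those edges of $K(k)$ that do not occur in $H$ yields an embedding of $H$ into the same genus-$\frac{(k-3)(k-4)}{2}$ surface.

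The one point that genuinely needs care is the independence of the $k-4$ handle attachments: the new arcs $v_kv_i$ must not be forced to cross one another or the three ``free'' arcs, and each attachment must preserve cellularity so that the degree-at-least-two argument for the existence of a suitable face $F$ remains available at every stage (only at the first stage, in fact). This is handled by doing the attachments sequentially — at each step using a face currently incident to $v_k$, of which one always survives — together with the standard fact that attaching a handle between two distinct faces of a cellular embedding and running a single new edge over it again produces a cellular embedding of genus one higher. Beyond that bookkeeping, the argument is just the genus arithmetic displayed above.
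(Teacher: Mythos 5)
Your proof is correct and follows essentially the same route as the paper's: induction on the number of vertices, with the sphere as base case, three of the new vertex's edges drawn inside an existing face at no topological cost, and one handle added for each of the remaining new edges, yielding the same genus arithmetic $\frac{(k-4)(k-5)}{2}+(k-4)=\frac{(k-3)(k-4)}{2}$. The only difference is that you make explicit the bookkeeping the paper leaves implicit — maintaining cellularity, exhibiting a face whose boundary walk meets three distinct vertices, and verifying that each handle attachment raises the genus by exactly one while keeping a face incident to the new vertex — which strengthens rather than changes the argument.
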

\begin{proof}
If $k=3$, the graph can be embedded into a plane hence into a sphere. When $k=4$, the graph also can be embedded into a sphere as we have shown above.

Now we assume the result holds for any complete graph with $k-$vertices, that is,
a complete graph of $k$ vertices can be embedded into a two dimensional oriented compact topological manifold with Euler characteristic of $2-(k-3)(k-4)$.
Then a complete graph with $k+1$ vertices can be obtained by adding a new vertex $v_{k+1}$ to a complete graph of $k-$vertices and
completing all the incident edges of $v_{k+1}$. Therefore,
one more vertex combines with $k$ more edges. Not all of these $k$ additional edges need to be separated by multi-connected parts.
Three of them can be embedded into the manifold of Euler characteristic of $2-(k-3)(k-4)$. This is because there are three vertices combined with
the new vertex $v_{k+1}$ and the edges between them build a simplex, which does not change the topology of the manifold when it is pasted to the complete graph of $k-$vertices.
So we only need $k-3$ more genus to get a new oriented compact manifold. Namely its Euler characteristic is
$$\chi_{k+1}=2-2(\frac{(k-3)(k-4)}{2}+(k-3))=2-(k-3)(k-2).$$
\end{proof}

From Theorem \ref{thm1}, we see that any graph can be embedded into a two dimensional manifold with sufficient large genus, whose upper bound is determined by the number of vertices of the graph. So we give the following definition.
\begin{defn}
We call a nonnegative integer $k$ the genus of a graph $G$, if $G$ can be embedded into a two dimensional oriented manifold of genus $k$ and can not be embedded
into a a two dimensional oriented manifold of genus $k-1$.
\end{defn}

There is a natural question that
\begin{ques}
How to determine the genus of an arbitrary graph?
\end{ques}
This question is equal to how to find the equivalent complete graph of any given bridgeless graph. If we classify all graphs to different classes by different genuses,
the question is equal to find the representation of each class. However, this problem is N-P hard.

Noticing a graph isomorphism is a map $f$ between two graphs $G_1$ and $G_2$ satisfying
\begin{itemize}
  \item (1) $f$ sends every vertex of $G_1$ to a vertex of $G_2$ and $f$ is a bijective;
  \item (2) $f$ sends every edge of $G_1$ to an edge of $G_2$ and $f$ is a bijective.
\end{itemize}
We define a weak sense of isomorphism between graphs.
\begin{defn}
Two graphs $G_1$ and $G_2$ are called homotopic to each other if $G_1$ and $G_2$ have the same genus.
\end{defn}

A more intrinsic question is that
\begin{ques}
What is the graphic characteristic of a graph and the topological correspondence of its corresponding embedded manifold?\\
\end{ques}

Now we illustrate the complete graph $K(5)$ with 5 vertices to point out the failure of the embedding method to solve the cycle double cover conjecture.

\begin{exm}
The failure of embedding method in the case of $K(5)$.
\end{exm}
$K(5)$ can be embedded into a torus as shown in Figure \ref{K5}.
\begin{figure}[h!]
\begin{center}
  \includegraphics[width=12cm]{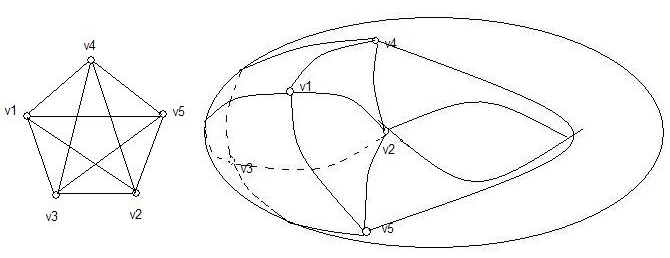}\\
  \caption{K(5) in a torus}\label{K5}
  \end{center}
\end{figure}
Cutting the torus along each edges of $K(5)$ provides pieces with boundaries
$$v_4v_3v_1,\quad v_4v_2v_1,\quad v_3v_1v_5,\quad v_1v_2v_5,\quad v_3v_2v_4v_5v_2v_3v_5v_4.$$
The boundary of the last piece is not a cycle, since $v_4v_5$ and $v_2v_3$ have been passed twice relatively.

One the other hand, there is an obvious cycle double covering of $K(5)$, by noticing that $K(5)$ is an eulerian graph. That is,
$$v_4v_1v_3v_2v_5v_4\quad \mbox{and}\quad v_4v_3v_5v_1v_2v_4.$$

As is well known, a torus $T$ is equal to $T=S^1\times S^1$. These two cycles go around the two $S^1$'s once respectively.
If we cut the torus along any one of these two cycles, the surface we get is homeomorphic to the manifold which is obtained by cutting
the M\"obius strip along the axle circle. That is,
$$T_2\setminus\{S_1,S_2\}\simeq M\setminus S_3,$$
where $M$ denotes the M\"obius strip, $T_2$ denotes the standard torus, $S_1=v_4v_1v_3v_2v_5v_4$, $S_2=v_4v_3v_5v_1v_2v_4$ and the center cycle of a M\"obius strip $S_3$ are all isomorphic to standard $S^1$.

Hence the cycles cannot be considered as the boundaries of some simply connected oriented pieces.
\qed\\

\section{Two-sheets covering of a graph and the proof of cycle double cover conjecture}

Although the embedding method can not solve the conjecture as we have illustrated above, it still provides us a way to investigate this question.

The important fact is that any two dimensional compact oriented manifold has some two-sheets covering. Let's see some examples. The two-sheets covering of
a sphere is double spheres, which are not connected. There are four kinds of two-sheets coverings of a torus. One is the double toruses and any one of the others is isomorphic to a torus.
So does the other manifold with more genuses.

Now suppose we have embedded a graph into a compact oriented manifold. If we lift the graph into one of the connected two-sheets covering spaces
(which could be achieved except the case of sphere), then we will get a two-sheets covering of the graph.
Considering that a graph which could be embedded into a sphere must be an eulerian graph, it is trivial and can be omitted.
Since the two-sheets covering of the graph is located on a connected manifold, these independent graphs can be connected in some way.
If so, the conjecture of cycle double cover becomes an eulerian path problem of the ``connected" two-sheets covering graph.

This program can be simplified into a description of graph theory. Now we use this idea to solve the problem.
\\

\textbf{Proof of Theorem\ref{main theorem}:} We use $G$ to denote a graph $G(v_i,e_{ij})$, with $\{e_{ij}=v_iv_j\}$ denoting the edges and $\{v_i\}$ representing the vertices of $G$. We usually use $v_iv_j$ to denote the edge from $v_i$ to $v_j$ with length 1, and $v_iv_2\cdots v_k$ to denote a path from $v_1$ to $v_k$ with length $k$.
\\

\emph{Step 1:} Lifting the graph $G$ to a connected two-sheets covering space, we get two isomorphic graphs $G_1(v_i^1,e_{ij}^1)$ and $G_2(v_i^2,e_{ij}^2)$.
We can simply consider $G_1$ and $G_2$ as two usual copies of graph $G$. There are two kinds of vertices in $G$, called the odd and even vertices, according to their degrees.
We pick up all the odd vertices $v_{\alpha}$ in $G$ with $d_{\alpha}\equiv1$(mod 2), and connect $v_{\alpha}^1$ and $v_{\alpha}^2$ for all couples of odd vertices to get a new connected graph $\tilde{G}$.
By denoting the \emph{auxiliary edge} which connects the pair of odd vertices $v_i^1$ and $v_i^2$ by $c_i=v^1_iv^2_i$,
the new graph $\tilde{G}$ can be described as $\tilde{G}=\tilde{G}(v_i^1,v_k^2;e_{ij}^1,e_{kl}^2,c_{\alpha})=\tilde{G}(v_{\xi};e_{\xi\eta},c_{\alpha})$.
We can see easily that any vertex in $\tilde{G}$ has an even degree, namely,  $d_{\xi}\equiv0$(mod 2) for any $v_{\xi}\in\tilde{G}$.
There is an extreme case that $G$ is a graph which only contains even vertices, that is, $d_i\equiv0$(mod 2) for any $v_i\in G$.
If so, we can obtain $\tilde{G}$ by just connecting one arbitrary pair of vertices $v_j^1$ and $v_j^2$, e.g. connecting $v_1^1$ and $v_1^2$. Thereby,
the auxiliary edge $c_1$ is a bridge of $\tilde{G}$. We find that all the vertices in $\tilde{G}$ have even degrees, except $v_1^1$ and $v_1^2$.
So in either cases, we get an eularian graph $\tilde{G}$.
Then $\tilde{G}$ can be covered by an eulerian path, which is a circuit in the former case. Denote the eulerian path by $\tilde E$.

We define the projection map from $\tilde G$ to $G$ by $p$ with $p(v^{\iota}_i)=v_i$, $p(e^{\iota}_{ij})=e_{ij}$ and $p(c_i)=0$.

We denote each cycle in $\tilde E$ that does not pass any auxiliary edge $c_i$ by $\tilde l_p$. Such a cycle is completely contained in $G_1$ or $G_2$. We pick them out to form a set of cycles
\begin{eqnarray*}
\tilde L=\{\tilde l_p=v^a_{p_1}v^a_{p_2}\cdots v^a_{p_k}v^a_{p_1}|p_1\cdots p_k\in\{1,2,\cdots,n\},p_1\neq\cdots \neq p_k,k\geq3, n=1 \,\mbox{or}\,2\}.
\end{eqnarray*}
The projection of each $\tilde l_p$ is still a cycle in $G$. All the projections form a projective set of $\tilde L$, denoted by $L$. Each element $l$ in $L$ is a projection of one element $\tilde l$ in $\tilde L$ (two $l$'s may exactly have the same edges). $\tilde l$ is called the \emph{one-sheet lifting of $l$}. $\tilde L$ is called the \emph{one-sheet lifting of $L$}.

$\tilde{G}$ contains a bridge if and only if $G$ is an eulerian graph. The reason is that $G$ is bridgeless and the odd vertices must appear in couple. Hence all the cycles in $\tilde E$ must be completely contained in $G_1$ or $G_2$, if all the vertices in $G$ are even vertices. So $\tilde L$ is the cycle double cover of $G$ in this case.

From now on, we only need to consider the case that $G$ contains some odd vertices.\\

\emph{Step 2:} When $G$ has odd vertices, $\tilde E-\tilde L$ is a union of circuits in $\tilde G$, which must cover all the set of auxiliary edges $C=\{c_i\}$. $\tilde E-\tilde L$ could be decomposed into a sequence of cycles, which form a set $\tilde M=\{\tilde m_i\}$ by
\begin{eqnarray*}
\begin{split}
\tilde M&=\{\tilde m_i=a_{i_1}a_{i_2}\cdots a_{i_k}a_{i_1}|a_{i_j}\in\{v^1_{l}\}\cup\{v^2_k\}\}\\
&=\{\tilde m_i=b_{i_1i_2}b_{i_2i_3}\cdots b_{i_ki_1}|b_{i_ji_l}\in\{e^1_{pq}\}\cup\{e^2_{st}\}\cup C\}.
\end{split}
\end{eqnarray*}
Considering the closeness of each $m_i$, such a cycle $m_i$ must pass even numbers of auxiliary edges, that is, $|\tilde m_i\cap C|$ is even.

The projection of each $\tilde m_i$ is denoted by $m_i=p(\tilde m_i)$, which is a closed walk in $G$. All such projections form a set $M=\{m_i\}$. For an $m_i\in M$, the edges it passed can be decomposed into two sets. The first one consists of all the edges passed only once by $m_i$, while the second set contains the remaining edges passed exactly twice. If we consider the covering sheet of $m_i$ on each edge, the two sets can be regarded as a set of cycles in $m_i$ and a set of segments in $m_i$, respectively. We denote them by $Q^i_1$ and $Q^i_2$, respectively. Moreover, we can assume each element in $Q^i_i$ or $Q^i_2$ is irreducible. Namely, $m_i=Q^i_1\cup Q^i_2$ with
\begin{eqnarray*}
\begin{split}
Q^i_1&=\{q_j=v_{j_1}v_{j_2}\cdots v_{j_k}v_{j_1}|v_{j_r}\neq v_{j_s}, \forall r\neq s,k\geq 3\},\\
Q^i_2&=\{q_j=v_{j_1}v_{j_2}v_{j_1}\}.
\end{split}
\end{eqnarray*}
We set $Q_1=\cup_i Q^i_1$ and $Q_2=\cup_iQ^i_2$, where the summation is taken over all $m_i\in M$.

Now we connect the cycles in $Q_1$ as long as possible to get a new set of circuits. Precisely, suppose $q_1,q_2\in Q_1$ are two cycles in $Q_1$. If $q_1, q_2$ only have common vertices, we can get a larger circuit. If $q_1, q_2$ have common edges, which must form some segments in $q_1\cup q_2$, we can get a larger circuit by taking off the segments. Inductively, we get a set $R$ of all such maximal circuits in $Q_1$. All the segments we have taken off form a set $S_T$.
\begin{rem}
$R$ is not unique, which depends on the order of $q_i\in Q$ to connect together. However, circuits in $R$ are all maximal.
\end{rem}

We connect the segments in $Q^i_2$ as long as possible to get a set $S^i_C$. Precisely, suppose $s_1,s_2\in S^i_C$ are two segments, which share a common ending vertex, then we connect them together to get a get a larger segment. Inductively, we get a set, still denoted by $S^i_C$. An element in $S^i_C$ is a segment, whose induced graph is a path. For each $m_i\in M$, the union of sets $S^i_C$ provides a set $\{S^i_C\}=\cup_iS^i_C$.

We call a closed walk a \emph{double cycle}, if the induced graph of it is a cycle, and each edge of the cycle is passed exactly twice by the closed walk. Connecting the segments in $\{S^i_C\}$ as long as possible, one can get a set of segments $S_C$ and a set of double cycles $D_C$. Precisely, any two segments $s_1,s_2\in\{S^i_C\}$ sharing one or two common ending vertices are connected together at the common ending vertices to form a larger segment or double circuit. It is obvious that such segments are contained in different elements in $M$, i.e., $s_1\in M_{i_1}$, $s_2\in m_{i_2}$ and $i_1\neq i_2$. Inductively, we get a set of segments $S_C$ and a set of double set $D_C$.

We union the two sets $S_T$ and $S_C$, and connect the segments in the union set as long as possible to get a set of segments $S$ and a set of double circuits $D_T$, by repeating the above process on elements in $S_T$ and $S_C$. Let $D=D_C\cup D_T$. We get a set of maximal segments $S$ and a set of double circuits $D$. Any element $s$ in $S$ is maximal because any two segments in $S$ do not share any common ending vertex.
\begin{rem}
$S$ is not unique. However, segments in $S$ are all maximal.
\end{rem}

A segment $s$ in $G$ has two kinds of ending vertices:
\begin{itemize}
  \item[(I):] odd ending vertex.
  \item[(II):] even ending vertex.
\end{itemize}
\begin{prop}
The second kind of ending vertex (II) can not appear in a segment $s\in S$.
\end{prop}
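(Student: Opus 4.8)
The plan is to argue by contradiction: suppose some segment $s\in S$ has an even vertex $v_i$ of $G$ as an ending vertex, so that $s$ turns around at $v_i$ along an edge $e_{ia}=v_iv_a$, which is then covered exactly twice, both times by $s$.

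\emph{Passing to the covering graph.} Since $v_i$ is even, no auxiliary edge $c_i$ was attached to it when $\tilde G$ was built, so in $\tilde G$ the two fibres $v_i^1,v_i^2$ each have the \emph{even} degree $d_i$, and there is no edge of $\tilde G$ joining them. Hence the Euler circuit $\tilde E$ splits, at $v_i^1$ and again at $v_i^2$, into $d_i/2$ transitions, each pairing two distinct lifted edges, and no sub-walk of $\tilde E$ can pass from the $v_i^1$-side to the $v_i^2$-side ``across $v_i$''. From this I would first deduce that none of the closed walks $m_j=p(\tilde m_j)$ traverses an edge at $v_i$ twice in immediate succession: such a local turn-around would force $\tilde m_j$ either to reuse one of $e^1_{ia},e^2_{ia}$ (impossible in a trail) or to cross the non-existent $c_i$. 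So, before the connecting operations, every visit of an $m_j$ to $v_i$ is a pass-through on two distinct edges, and the turn-around that $s$ performs at $v_i$ can only have been manufactured while connecting the irreducible pieces into $R$, $S_T$, $S_C$, $S$, $D$.

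\emph{Reducing to a parity count.} Next I would track the number of segment-ends sitting at $v_i$ through the whole construction. Connecting two segments at $v_i$, or closing a chain of segments into a double circuit, changes this number by an even amount, so its parity is governed by the irreducible input. An edge $e_{ia}$ at $v_i$ feeds a segment-end to $v_i$ precisely when both lifts $e^1_{ia},e^2_{ia}$ lie in $\tilde M=\bigcup_j\tilde m_j$ (equivalently, when neither lies in $\tilde L$): if the two lifts lie in the same $\tilde m_j$ then $e_{ia}$ is a twice-edge of $m_j$, contributing along the $Q_2\to S_C$ branch; if they lie in different $\tilde m_j$'s then $e_{ia}$ lies in two distinct cycles of $Q_1$, is part of the material taken off when those cycles are merged, and contributes along the $Q_1\to S_T$ branch. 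Summing the two branches, the number of segments of $S$ ending at $v_i$ is congruent $\pmod 2$ to the number $b$ of edges at $v_i$ both of whose lifts lie in $\tilde M$. Since $S$ is maximal, at most one segment of $S$ ends at $v_i$, so the Proposition reduces to showing that $b$ is even.

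\emph{The remaining parity, and the main obstacle.} Because the $\tilde L$-cycles are edge-disjoint and each uses two lifted edges at every fibre it meets, the number of lifted edges at $v_i^1$ (respectively $v_i^2$) lying in $\tilde M$ equals $d_i$ minus an even number, hence is even. Writing $X$ for the set of edges at $v_i$ whose $v_i^1$-lift is in $\tilde M$ and $Y$ for those whose $v_i^2$-lift is in $\tilde M$, we thus have $|X|,|Y|$ even and $b=|X\cap Y|$; the task is to show $|X\cap Y|$ is even. This is the crux, and I expect it to be the hard part: it needs the precise way the cycle decomposition of $\tilde E$ pairs up the two preimages $e^1_{ia},e^2_{ia}$ of each edge at $v_i$ and distributes them between $\tilde L$ and $\tilde M$ — i.e.\ some genuine use of the structure of $\tilde G$ and $\tilde E$ beyond degree counting — together with a careful verification that the order-dependent connecting operations at $v_i$ really do preserve the parity claimed in the previous step. (The contrasting case, $v_i$ odd, is consistent with the Proposition precisely because there $d_i$ and hence $|X|,|Y|$ need not be even and the auxiliary edge $c_i$ supplies the missing sheet-switch.)
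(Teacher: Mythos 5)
Your proposal is not a complete proof: it terminates at the step you yourself identify as the crux, namely the claim that $b=|X\cap Y|$ (the number of edges at $v_i$ both of whose lifts lie in $\tilde M$) is even. Everything before that point is bookkeeping that transfers the parity of the number of $S$-segment-ends at $v_i$ onto $b$; by itself it proves nothing, and the residual claim is not a routine verification but carries the entire content of the Proposition. Indeed, since $|X|$ and $|Y|$ are even and $d_i$ is even, $|X\cap Y|\equiv|X^c\cap Y^c|\pmod 2$, so your target is equivalent to showing that the number of edges at $v_i$ \emph{both} of whose lifts are absorbed into $\tilde L$ is even — and no degree count at $v_i^1$ or $v_i^2$ separately controls that intersection. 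Nothing in the construction of $\tilde E$ as an arbitrary Euler circuit of $\tilde G$ visibly forbids exactly one edge at $v_i$ from having both of its lifts lie on $\tilde L$-cycles. There are also smaller soft spots you flag yourself: the order-dependent merges producing $R$, $S_T$, $S_C$ and $S$ must be checked to change the end-count at $v_i$ only by even amounts, and an edge shared by two $Q_1$-cycles is only "taken off" into $S_T$ if those two cycles are in fact merged with one another.

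The paper's own proof takes a completely different and much shorter route: it never passes to $\tilde G$, but argues locally in $G$ at the ending vertex $v_1$ — the segment $s$ doubly covers one edge there, every cycle or segment \emph{passing through} $v_1$ accounts for an even number of edge-incidences at $v_1$, and since $d_1$ is even some further segment must also terminate at $v_1$, contradicting the maximality of $s$ in $S$. Your covering-space detour is genuinely different and more explicit about where the difficulty lives, but to make it work you must actually prove the $|X\cap Y|$ parity claim from the structure of the Euler circuit; alternatively, run the local incidence count at $v_i$ directly in $G$, taking care about precisely the point your analysis exposes (distinct cycles through $v_i$ may share edges there, so incidences must be counted with multiplicity). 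As it stands, the proposal has a genuine gap.
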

\begin{proof}
Suppose $s\in S$ is a maximal segment in $S$. Denote its two ending vertices by $v_1$ and $v_k$. If $v_1$ is an even vertex, without loss of generality, we can assume that the degree of $v_1$ is at least 3. Hence, besides the segment $s$, there must be another segment or cycle passing through $v_1$, either of which contributes 2 degrees to $v_1$. Hence there is another segment starting from $v_1$, which is a contradiction to the maximal of $s\in S$.
\end{proof}
Denote the projection of the eulerian path $\tilde E$ in $G$ by $E$, which is a closed walk. $E$ can be decomposed into three parts by
\begin{eqnarray}\label{ELRS}
E=L \cup R\cup D\cup S,
\end{eqnarray}
where $\cup$ means the union. $L$ is a set of cycles. $R$ is a set of circuits, which also can be regarded as a set of cycles. $D$ is set of double circuits, which also can be considered as a set of cycles. We define an element $d$ in $D$ to be a cycle, instead of a double cycle for uniformity. $S$ is a set of maximal segments.

Since $D$ is a set of double circuits, a cycle $d$ in $D$ must one-to-one corresponding to a cycle in $\tilde G$, denoted by $\tilde d$. We call $\tilde d$ is the o\emph{ne-sheet lifting of $d$}. The union of $\tilde d$ form a set $\tilde D$. $\tilde D$ is called the \emph{one-sheet lifting of $D$}.

For any maximal segment $s$ in $S$, we can lift $s$ into $\tilde G$ in the following way. Let $s=v_{i_1}\cdots v_{i_{k-1}}v_{i_k}v_{i_{k-1}}\cdots v_{i_1}$ be a maximal segment in $S$, where $v_{i_1}$ and $v_{i_k}$ are both odd vertices. Any vertex $v_{i_j}$ are lifted into $v_{i_j}^1$ and $v_{i_j}^2$ to form two paths in $G_1$ and $G_2$, respectively. Then we connect $v^1_{i_1}$ and $v^2_{i_1}$ by an auxiliary edge $c_{i_1}$, and connect $v^1_{i_k}$ and $v^2_{i_k}$ by an auxiliary edge $c_{i_k}$. Such auxiliary edges exist for the odd vertices $v_{i_1}$ and $v_{i_k}$. In this way, any segment $s$ in $S$ can be lifted into a cycle $\tilde s$ in $\tilde E$, which is called a lifting segment. $\tilde s$ is called the \emph{two-sheets lifting of $s$}. All the lifting segments form a set $\tilde S$. $\tilde S$ is called the \emph{two-sheets lifting of $S$}. It is easy to see that $p(\tilde s)=s$, for any $\tilde s\in \tilde S$.

$R$ is a set of maximal circuits, which also can be regarded as a set of cycles. We use the flexible terminology ``circuit" to describe an element in $R$. An element in $R$ is a circuit, which may not be maximal or irreducible. All the elements in $R$ form a one-sheet covering of the edge-set of $R$. We consider the set of walks $\tilde R=\tilde E-\tilde L-\tilde D-\tilde S$. It is obvious that $\tilde R$ can be considered as a set of disconnected circuits since elements in $\tilde E$, $\tilde L$, $\tilde D$ and $\tilde S$ are all circuits. More precisely, it satisfies that
\begin{prop}\label{prop4.4}
Consider elements in $\tilde R$ to be the disconnected circuits in $\tilde E$. The elements in $R$ are regarded as circuits to be determined. $\tilde R$ is a one-sheet lifting of $R$, that is, every element $\tilde r$ (a maximal circuit in $\tilde R$) of $\tilde R$ is a one-sheet covering of a circuit $r$ in $R$. Furthermore, for all elements in $\tilde R$, their projections form a one-sheet covering of the edge-set of $R$.
\end{prop}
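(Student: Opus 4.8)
The plan is to verify the three assertions in turn, all by chasing edges and vertex-degrees through the covering projection $p\colon\tilde G\to G$, using throughout that $\tilde E$ is an Eulerian circuit of $\tilde G$, so that as an edge multiset $\tilde E$ equals $E(\tilde G)$ with multiplicity one and every vertex of $\tilde G$ has even degree. The first step is to see that $\tilde R:=\tilde E-\tilde L-\tilde D-\tilde S$ is well defined and splits into disjoint circuits of $\tilde G$. By construction $\tilde L,\tilde D,\tilde S$ are unions of circuits of $\tilde G$ contained in $\tilde E$ (the one-sheet liftings of $L$ and $D$, and the two-sheets lifting of $S$); they are pairwise edge-disjoint because they lift the pairwise edge-disjoint pieces $L,D,S$ of $E=L\cup R\cup D\cup S$, using that $\tilde L$ and $\tilde D$ contain no auxiliary edge while $\tilde S$ uses exactly the auxiliary edges $c_{i_1},c_{i_k}$ at the odd ending vertices of the segments of $S$, which are distinct by the maximality of $S$. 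One also notes that each odd vertex of $G$ is the ending vertex of exactly one segment of $S$, so $\tilde S$ exhausts the whole set $C$ of auxiliary edges and $\tilde R\subset G_1\cup G_2$. Finally, deleting a union of circuits from $\tilde G$ removes an even number of edge-ends at each vertex, so $\tilde R$ has all even degrees and therefore decomposes into edge-disjoint circuits, each a sub-walk of $\tilde E$; this is the ``disconnected circuits in $\tilde E$'' part.

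Next I would do the edge accounting. Fix an edge $e_{ij}$ of $G$; its $p$-preimage is $\{e_{ij}^1,e_{ij}^2\}$, with both lifts lying in $\tilde E$. Tracing the construction of Step 2: a lift $e_{ij}^a$ lies in $\tilde L$ exactly when it lies on a cycle of $\tilde E$ avoiding every auxiliary edge, i.e.\ when $e_{ij}$ is covered by $L$ through that cycle; a lift lies in $\tilde D$ (resp.\ in $\tilde S$) exactly when $e_{ij}$ is one of the edges passed twice by a double circuit of $D$ (resp.\ a segment of $S$), in which case both lifts are consumed; and the lifts not consumed are precisely those over the edges passed once in the cycles of $Q_1$ surviving the assembly of $R$, i.e.\ over the edges of $e(R)$. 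Hence, if $e_{ij}\in e(R)$ then exactly one of $e_{ij}^1,e_{ij}^2$ survives into $\tilde R$, and if $e_{ij}\notin e(R)$ then neither does, so the projections of the elements of $\tilde R$ form a one-sheet covering of $e(R)$.

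Finally I would match the circuits. A maximal circuit $\tilde r$ of $\tilde R$ is a circuit of $\tilde G$ which, by the previous step, uses at most one lift of each edge of $G$ and no auxiliary edge; hence $p(\tilde r)$ is a closed trail in $G$, that is, a circuit, with edge-set inside $e(R)$. Conversely, given $r\in R$, one lifts it edge by edge through the unique surviving lifts; the key point is that consecutive surviving lifts already meet at the prescribed vertex, so this yields a single connected circuit $\tilde r\subset\tilde R$ with $p(\tilde r)=r$. Since the disjoint circuits of $\tilde R$ project onto $e(R)$ with total multiplicity one, the lifts $\tilde r$ for $r\in R$ exhaust $\tilde R$, and $\tilde r\leftrightarrow r$ is the asserted one-sheet lifting of $R$.

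I expect the main obstacle to be the second and third steps together: making the bookkeeping ``which lift of $e_{ij}$ lands in which of $\tilde L,\tilde D,\tilde S,\tilde R$'' fully rigorous, since $R,D,S$ emerge from a non-canonical iterative merging (cf.\ the Remarks), and then proving the connectivity claim that the surviving lifts of a single $r\in R$ glue into one circuit rather than several. Both rest on the fact that the circuits of $R$ were assembled precisely from the once-passed cycles of $\bigcup_i p(\tilde m_i)$, so that $\tilde R$ inherits its circuit structure directly from the sub-circuits $\tilde m_i$ of $\tilde E$; cleanly isolating and using this inheritance is the crux.
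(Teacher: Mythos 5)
Your architecture is essentially the paper's: establish that $\tilde R=\tilde E-\tilde L-\tilde D-\tilde S$ has even degree at every vertex and hence splits into circuits, then push everything down through $p(\tilde E-\tilde L-\tilde D-\tilde S)=E-L-D-S=R$. (The paper uses a cut-edge argument where you use the even-degree circuit decomposition, and it disposes of what you call the crux --- the bijection between the circuits assembled into $R$ and the components of $\tilde R$ --- by simply \emph{redefining} $R$ to be the set of projections of the components of $\tilde R$, which is legitimate because $R$ was only ever determined up to a non-canonical merging order; you therefore do not need to prove that each previously assembled $r\in R$ lifts to a single connected circuit.)

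One step of your argument is false as stated: the claim that each odd vertex of $G$ is the ending vertex of exactly one segment of $S$, hence that $\tilde S$ exhausts the auxiliary edges and $\tilde R\subset G_1\cup G_2$. ``At most one'' follows from the maximality of the segments in $S$, but ``at least one'' fails: a cycle $\tilde m$ of $\tilde E-\tilde L$ may traverse the auxiliary edge $c_i$ by entering $v_i^1$ along $e_{ik}^1$ and leaving $v_i^2$ along $e_{il}^2$ with $k\neq l$ and with neither $e_{ik}$ nor $e_{il}$ doubly covered by $p(\tilde m)$ (for instance $\tilde m=v_1^1v_2^1v_2^2v_3^2v_1^2v_1^1$ projects to the simple cycle $v_1v_2v_3v_1$, producing no segment at all); then no segment of $S$ ends at $v_i$, and $c_i$ lands in $\tilde R$. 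So a maximal circuit of $\tilde R$ may well contain auxiliary edges, contrary to what your third step assumes. The damage is local: since $p(c_i)=0$, an auxiliary edge only makes the projected walk pause at $v_i$, so $p(\tilde r)$ is still a closed walk covering each of its edges at most once, and your conclusion survives once you drop the ``no auxiliary edge'' hypothesis --- which is in effect how the paper's own proof proceeds, as it never claims $\tilde R\subset G_1\cup G_2$.
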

\begin{proof}
$\tilde R$ is a set of disconnected circuits. Such a circuit can be considered topologically as a connected component of $\tilde R$. $\tilde E$ is an eulerian path, hence is a circuit when there is some odd vertices in $G$. 
Noticing the degree of any vertex in a circuit is even, every connected component $R_C$ of $\tilde R$ still admits an eulerian path, because all the degrees of vertices in $\tilde E$ are even. We claim that there is no cut edge in $\tilde R$. Otherwise, taking this cut edge away in a connected component $\tilde R_C$ splits $\tilde R_C$ into two disconnected parts, each of which contains only one odd vertex, since all the vertices in $\tilde R_C$ are even. But that is impossible, since odd vertices must appear in pairs. So each connected component in $\tilde R$ is a closed walk, hence is a maximal circuit.

Denote the projection of a connected component $\tilde R_C$ in $\tilde R$ by $R_C$. Since $\tilde R_C$ is a circuit in $\tilde G$, $p(\tilde R_C)$ is a closed walk in $G$. From the construction of $S$, there is no any segment in $p(\tilde R_C)$, so every element of $\tilde R$ is a one-sheet covering of a circuit $r$ in $R$. We can rearrange the set $R$ to be the union of projections of disconnected circuits in $\tilde R$. Because
$$p(\tilde R)=p(\tilde E-\tilde L-\tilde D-\tilde S)=p(\tilde E)-p(\tilde L)-p(\tilde D)-p(\tilde S)=E-L-D-S=R,$$
We finish the proof.
\end{proof}

Now we get
\begin{eqnarray*}
\tilde E=\tilde L\dot\cup\tilde R\dot\cup\tilde D\dot\cup\tilde S,
\end{eqnarray*}
where $\dot\cup$ means the disjoint union, $\tilde L$ is the one-sheet lifting of $L$, $\tilde R$ is the one-sheet lifting of $R$, $\tilde D$ is the one-sheet lifting of $D$, and $\tilde S$ is the two-sheets lifting of $S$. The projection of $\tilde E$ gives (\ref{ELRS}).\\

\emph{Step 3:} Any ending vertex of a segment $s$ in $S$ has the following two possibilities:
\begin{itemize}
  \item[(a):] there is a cycle in $L\cup R$ passing through it.
  \item[(b):] there is no any cycle in $L\cup R$ passing through it.
\end{itemize}
There is a case that an ending vertex of a segment $s_1$ in $S$ is an inner vertex of another segment $s_2$.
\begin{exm}
Suppose $s_1,s_2\in S$ are two maximal segments. One ending vertex of $s_1$ is an inner vertex of $s_2$. We can represent that
$$s_1=v_1\cdots v_{k-1}v_kv_{k-1}\cdots v_1, \quad\mbox{and}\quad s_2=v_m\cdots v_k\cdots v_{l-1}v_lv_{l-1}\cdots v_k\cdots v_m.$$
We can construct another two segments $s_3$ and $s_4$ by
$$s_3=v_1\cdots v_k\cdots v_m\cdots v_k\cdots v_1, \quad\mbox{and}\quad s_4=v_k\cdots v_{l-1}v_lv_{l-1}\cdots v_k.$$
The relation is $s_1\cup s_2=s_3\cup s_4$.
\end{exm}

We introduce the concept ``fork" to describe this situation. The set of forks $F$ is constructed form $S$ in the following way.

Let $F=S$ first, that is any segment in $S$ can be considered as a fork in $F$. If any two forks $f_1$ and $f_2$ in $F$ satisfy that one ending vertex of $f_1$ is an inner vertex of $f_2$, then we connect $f_1$ and $f_2$ to form a new fork in $F$, and replace the forks $f_1$ and $f_2$ by the new fork. Continue this process until all the forks in $F$ are maximal.  That is, any two maximal fork in $F$ are disconnected at the ending vertex, namely, any ending vertex of a maximal fork is not an inner vertex of another fork. At last, we get a set of maximal forks, still denoted by $F$.
\begin{defn}
Denote the set of edges of a fork $f$ by $e_f$. The bifurcation degree of a bifurcation vertex $v$ is the degree of $v$ in $G[e_f]$, which is the subgraph induced by $e_f$.
\end{defn}

Noticing that any two maximal forks $f_1$ and $f_2$ in $F$ only having common inner vertices are not connected to form a larger fork, we have that
\begin{prop}
Suppose $v$ is a bifurcation vertex of $f$. The bifurcation degree of $v$ is three.
\end{prop}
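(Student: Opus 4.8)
The plan is to argue by contradiction, leaning on exactly the feature of the construction of $F$ recalled just before the statement: forks of $F$ that meet only in common inner vertices are never merged. First I would fix notation. Since $f$ is a fork it is built from segments (each corresponding to a path) by gluing at vertices, so the paper's decomposition principle applies and the induced graph $G[e_f]$ is a tree $T$; moreover the number of times the closed walk $f$ passes through a vertex equals that vertex's degree in $T$ (for the path $v_1v_2v_3v_2v_1$ the middle vertex is visited twice, and in general an edge-doubled Euler tour of a tree visits a vertex as often as its degree, the base point counting once). Hence the bifurcation degree of $v$ is exactly the degree $d_T(v)$ of $v$ in $T$, and since $v$ is passed more than twice we have $d_T(v)\geq 3$. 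Everything then reduces to showing $d_T(v)\leq 3$.

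Next I would suppose $d:=d_T(v)\geq 4$ and split the fork at $v$. Deleting $v$ from $T$ decomposes $T$ into $d$ pairwise vertex-disjoint subtrees $T^{(1)},\dots,T^{(d)}$ joined to $v$ by edges $e_1,\dots,e_d$. Partition $\{e_1,\dots,e_d\}$ into two blocks of sizes $k$ and $d-k$ with $2\leq k\leq d-2$ (possible precisely because $d\geq 4$), and let $f_1$ (resp.\ $f_2$) be the sub-closed-walk of $f$ using only the edges of the first (resp.\ second) block and of the attached subtrees. Since $f$ traverses every edge exactly twice and the two blocks partition the edges of $T$ at $v$, each $f_i$ is again a closed walk traversing each of its edges exactly twice, i.e.\ a fork, with $f=f_1\cup f_2$. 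The subtrees being disjoint, $f_1$ and $f_2$ have no common vertex other than $v$; and because each $f_i$ uses at least two edges at $v$ it passes through $v$ at least twice, so $v$ is not an ending vertex of either $f_i$. Thus $v$ is a common inner vertex of $f_1$ and $f_2$, and is their only shared vertex.

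Then I would invoke the recalled property: two forks of $F$ sharing only common inner vertices are never connected into one fork. Hence $f_1$ and $f_2$ cannot both lie inside the single maximal fork $f$, contradicting $f=f_1\cup f_2$. Therefore $d_T(v)\leq 3$, and together with the reverse inequality the bifurcation degree of $v$ equals $3$. It is worth recording that the argument genuinely singles out the value $3$: when $d_T(v)=3$ the only way to split the three edges at $v$ into two nonempty blocks makes $v$ an ending vertex of one piece and an inner vertex of the other, which is exactly the ending-meets-inner configuration along which the construction of $F$ is permitted to merge, so no contradiction arises in that case.

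The step I expect to be the main obstacle is the legitimacy and placement of the splitting. I must verify carefully that the $f_1,f_2$ obtained by ``reading off'' the restricted sub-walks of $f$ are genuine forks (each edge of $f_i$ traversed exactly twice, each $G[e_{f_i}]$ a subtree of $T$ in which $v$ is non-ending), and --- more delicately --- that ``$f_1,f_2$ share only an inner vertex, hence were never merged'' is a real contradiction with the way $F$ is assembled iteratively from $S$, not merely with a one-step merge. To make this watertight I would want to reformulate ``maximal fork in $F$'' as ``a fork that cannot be written as the union of two forks meeting only in inner vertices,'' so that the present statement becomes a direct consequence of maximality; checking that this reformulation is equivalent to the iterative construction is the part that really needs to be pinned down.
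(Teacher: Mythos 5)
Your route is genuinely different from the paper's: the paper works at the level of the segments of $S$, arguing that a bifurcation degree above three would force more than one maximal segment to have an ending vertex at $v$, contradicting the maximality of $S$ (no two segments of $S$ share an ending vertex); you instead split the whole fork at $v$ into two sub-forks and appeal to a maximality property of $F$. However, your proposal has two genuine gaps. First, the premise that $G[e_f]$ is a tree is not available from the paper's definitions: a fork is only required to pass every edge exactly twice, and Step 4 of the paper explicitly considers forks whose edge-sets contain cycles (this is why the set $H_1$ is extracted there). If $G[e_f]$ contains a cycle through $v$, the pieces obtained by deleting $v$ are not pairwise disjoint, your two edge-blocks at $v$ need not induce closed sub-walks meeting only at $v$, and even the identification of the bifurcation degree with the number of passes through $v$ has to be revisited. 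The tree hypothesis is doing real work in your argument and is nowhere justified.

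Second --- and you flag this yourself as the step to be pinned down --- the lemma you need, that a maximal fork of $F$ cannot be written as a union of two forks meeting only in a common inner vertex, is not a consequence of the iterative construction as stated; it is essentially the content of the proposition. The construction merges two forks only when an ending vertex of one is an inner vertex of the other, so nothing a priori forbids two segments of $S$ from both traversing $v$ as an inner vertex (contributing $2+2=4$ to the degree at $v$) while being joined into one maximal fork through a chain of merges occurring at other vertices; in that configuration your $f_1,f_2$ exist and meet only at the inner vertex $v$, yet no step of the construction ever attempted, or was forbidden, to merge them at $v$, so no contradiction results. (One can rule this chain out when $G[e_f]$ is a tree, by tracking which component of $G[e_f]-v$ each intermediate segment lies in, but that argument is absent from your write-up and collapses without the tree hypothesis.) This $2+2$ case is precisely the one the paper's own one-line proof also skips, since a bifurcation degree of four does not by itself force more than one segment to end at $v$; your proposal therefore does not close the statement but relocates the difficulty into an unproved reformulation of maximality.
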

\begin{proof}
Let $v$ be a bifurcation vertex of $f$. If the bifurcation degree is more than three, there must be more than one segments having ending vertex at $v$, which contradicts to the construction of $S$.
\end{proof}

Now we obtain from $E=L\cup R\cup D\cup S$ that
$$E=L\cup R\cup D\cup F,$$
where $F$ is the set of maximal forks.
The lifting of $F$ into $\tilde G$ is denoted by $\tilde F$, with $p(\tilde f_i)=f_i$ for any $\tilde f_i\in\tilde F$. Suppose $f_i$ is a maximal fork in $F$. $f_i=\cup_j s_{i_j}$, where $s_{i_j}$ are maximal segments contained in $f_i$. $\tilde f_i$ is a closed eulerian path consists of all the cycles $\tilde s_{i_j}$. It is easy to see that $\tilde F$ is a set of circuits, and any circuit $\tilde f$ in $\tilde F$ must contain $2k$ numbers of auxiliary edges with $k\geq1$. Therefore,
$$\tilde E=\tilde L\dot\cup \tilde R\dot\cup \tilde D\dot\cup\tilde F,$$
where $\tilde F$ is the two-sheets lifting of $F$.\\

\emph{Step 4:} It is obvious that each element in $F=E-L-R-D$ is closed, namely, any fork $f$ in $F$ must admit a walk returning back to its starting vertex. A fork $f_i$ may contain some cycles in $E$, that is, the edge-set of $f_i$ contains some cycles. If so, the covering of each cycle in $F$ is double. We pick out one sheet for each cycle to form a set $H_{1i}$ by
\begin{eqnarray*}
H_{1i}=\{h_{i_j}=v_{j_1}v_{j_2}\cdots v_{j_k}v_{j_1}| v_{j_l}\in f_i, j_1\neq j_2\neq\cdots\neq j_k, k\geq3\}.
\end{eqnarray*}
The remaining edges in each $f_i$ still form a closed walk. To see that, we need to lifting each cycle $h_{i_j}$ into $\tilde G$.

Suppose $h_{i_j}$ is a cycle in a fork $f_i$. The two-sheets lifting of $f_i$ is denoted by $\tilde f_i$, which is symmetric in $\tilde G$, that is, the subgraph $\tilde f_i\cap G_1$ is isomorphic to the subgraph $\tilde f_i\cap G_2$, by the lifting of $F$. $f_i$ provides a two-sheets covering of $h_{i_j}$, hence there is no any cycle in $\tilde L\dot\cup\tilde R\dot\cup\tilde D$ whose projection has a common edge with $h_{i_j}$. Suppose $h_{i_j}$ can be expressed by
$$h_{i_j}=v_{j_1}v_{j_2}\cdots v_{j_k}v_{j_1}.$$
There are two cycles in $\tilde G$ contained in $\tilde f_i$, that are isomorphic to $h_{i_j}$, respectively, namely,
$$h^1_{i_j}=v^1_{j_1}v^1_{j_2}\cdots v^1_{j_k}v^1_{j_1},\quad \mbox{and}\quad h^2_{i_j}=v^2_{j_1}v^2_{j_2}\cdots v^2_{j_k}v^2_{j_1}.$$
We define the lifting of $h_{i_j}$ to be any one of $\{h^1_{i_j},h^2_{i_j}\}$, e.g. $h^1_{i_j}$. Then the lifting of $H_{1i}$ can be defined by
$$\tilde H_{1i}=\{h^1_{i_j}=v^1_{j_1}v^1_{j_2}\cdots v^1_{j_k}v^1_{j_1}| v^1_{j_l}\in \tilde f_i\cap G_1, j_1\neq j_2\neq\cdots\neq j_k, k\geq3\}.$$

All forks $f_i\in F$ provide a union set $H_1=\cup_i H_{1i}$, which is a set of cycles in $E$. The lifting of $H_1$ is denoted by $\tilde H_1$, which is the union of $\tilde H_{1i}$, i.e., $\tilde H_1=\cup_i \tilde H_{1i}$. It is easy to see from the definition of $\tilde h_{i_j}$ that each cycle in $\tilde H_1$ is a one-sheet lifting of a cycle in $H_1$, and the set $\tilde H_1$ is the one-sheet lifting of $H_1$.

\begin{prop}\label{prop4.7}
For an arbitrary fork $f\in F$, the complement of some cycles in $f$ is a closed walk.
\end{prop}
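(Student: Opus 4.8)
The plan is to prove the statement by passing to the two-sheets lifting $\tilde f\in\tilde F$ of the fork $f=f_i$ and exploiting the fact, already recorded above, that $\tilde f$ is a circuit of $\tilde G$. First I would recall the elementary fact that the induced graph of any circuit has all even degrees, so $G[e(\tilde f)]$ is an even graph. Since $f$ covers each of its edges exactly twice and, in the lifting, the two occurrences of a fork edge $e_{ab}$ become the two \emph{distinct} edges $e^1_{ab}$ and $e^2_{ab}$ of $\tilde G$, the edge-set $e(\tilde f)$ consists precisely of the edges $e^1_{ab},e^2_{ab}$ for $e_{ab}\in e_f$ together with the auxiliary edges at the ending vertices of the constituent segments of $f$. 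Here one uses that each edge of $f$ lies in a unique constituent segment (otherwise it would be covered more than twice), so the lift is well defined. Consequently, for each chosen cycle $h_{i_j}\in H_{1i}$ the one-sheet lifted cycle $h^1_{i_j}=v^1_{j_1}v^1_{j_2}\cdots v^1_{j_k}v^1_{j_1}$ is a genuine subcycle of $\tilde f$, each of its edges being traversed exactly once by the trail $\tilde f$.

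Next I would delete, one cycle at a time, the cycles of $\tilde H_{1i}=\{h^1_{i_j}\}$ from $G[e(\tilde f)]$. Removing the edge-set of a cycle lowers the degree of each of its vertices by $2$ and changes no other degree, so evenness is preserved at every step; hence $G\big[e(\tilde f)\setminus e(\tilde H_{1i})\big]$ is again an even graph and therefore decomposes into edge-disjoint circuits, that is, it is a closed walk (a disjoint union of closed walks). Finally I would apply the projection $p$ and use $p(\tilde f)=f$, $p(h^1_{i_j})=h_{i_j}$, and $p(c_\alpha)=0$: the image $p\big(e(\tilde f)\setminus e(\tilde H_{1i})\big)$ is exactly the complement in $f$ of the cycles $h_{i_j}$, since $f$ covers each edge of $h_{i_j}$ twice while $h_{i_j}$ is removed once. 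This complement is thus a closed walk, which is the assertion of the proposition; the same argument applies to any subcollection of cycles in $f$, giving the ``some cycles'' version.

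The main obstacle I anticipate is making precise, and justifying, that the chosen cycles can be removed \emph{simultaneously}. If two cycles $h_{i_j}$ and $h_{i_{j'}}$ of $H_{1i}$ shared a common edge $e_{ab}$ of $G[e_f]$, one must be sure their ``sheets'' use the two different copies $e^1_{ab},e^2_{ab}$ (rather than both using $e^1_{ab}$), so that the degree bookkeeping above does not double-delete an edge of $\tilde f$; in other words, one has to read off from the construction of $H_{1i}$ that the peeled-off sheets are pairwise disjoint as sub-multigraphs of the doubled edge-set $2\cdot G[e_f]$. This is really a statement about how ``pick out one sheet for each cycle'' is carried out, and it should be verified using that each fork edge is covered exactly twice and belongs to a unique constituent segment. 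A secondary point is the meaning of ``closed walk'': deleting cycles from $G[e(\tilde f)]$ may disconnect it, so strictly one obtains a disjoint union of closed walks; either one checks connectivity is preserved (the mirror cycles $h^2_{i_j}$ in $G_2$ and the auxiliary edges remain untouched and keep the rest of $\tilde f$ tied together) or, as elsewhere in the paper, one adopts the looser use of the term.
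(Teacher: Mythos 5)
Your proposal follows essentially the same route as the paper's own proof: pass to the two-sheets lifting $\tilde f$, which is a circuit of $\tilde G$, observe that deleting the one-sheet lifted cycles $\tilde h_{i_j}$ preserves the all-even-degree property (the paper cites ``the same argument in Proposition \ref{prop4.4}'' for exactly this parity step), conclude the remainder is a closed eulerian walk, and project by $p$ to obtain the closed walk in $f$. The cautionary points you raise --- that the peeled-off sheets must be pairwise edge-disjoint in the doubled edge-set, and that the remainder may a priori be a disjoint union of closed walks --- are genuine subtleties that the paper's proof passes over silently, so your write-up is if anything slightly more careful, but it is the same argument.
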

\begin{proof}
Suppose $f$ is a fork in $F$. Since $f$ is a union of some segments, the two-sheets lifting $\tilde f$ of $f$ in $\tilde E$ is a union of some cycles, hence a circuit, which is denoted by $\tilde f$. Denote the cycles contained in $f$ by $h_1, h_2, \cdots, h_j$, whose liftings are denoted by $\tilde h_1, \tilde h_2, \cdots,\tilde h_j$. Since $\tilde h_1, \tilde h_2, \cdots,\tilde h_j$ are all cycles in $\tilde G$, and the lifting $\tilde f$ contains the auxiliary edge between any pair of $(v^1_{\alpha},v^2_{\alpha})$ in $\tilde f$ lifted from an ending vertex or a bifurcation vertex, by the same argument in Proposition \ref{prop4.4}, one obtains that $\tilde f-\cup_{i=1}^j\tilde h_i$ is still a closed eulerian path in $\tilde G$, hence is a circuit. Therefore, the eulerian path of $\tilde f-\cup_{i=1}^j\tilde h_i$ gives a closed walk of $p(\tilde f-\cup_{i=1}^j\tilde h_i)$.
\end{proof}
For each fork $f_i\in F$, we denote the complement of the union of all the cycles $h_{i_j}$ in $f_i$ by $b_i$, i.e., $b_i=f_i-\cup_j h_{i_j}$. We see from Proposition \ref{prop4.7} that $b_i$ is a closed walk. Let $B$ denote the set of all the closed walk $b_i$ constructed from $f_i\in F$. It is obvious that $B$ is one-to-one corresponding to $F$. The relation is $F=B\cup H_1$.

\begin{defn}
Elements in $B$ are called the branches. A branch is a closed walk, whose set of edges are passed by the branch once or twice. Moreover, the subgraph induced by the edges, which are covered only once by the branch, is a union of cycles.
\end{defn}
%

There are two kinds of edges in $B$. Let $B^O$ be the set of edges in $B$, which are covered only once, and let $B^T$ be the set of edges in $B$, that are covered exactly twice. Denoting the edge-sets of $B$, $B^O$ and $B^T$ by $e(B)$, $e(B^O)$ and $e(B^T)$, respectively, we have $e(B)=e(B^O)\dot\cup e(B^T)$. For any branch $b_i\in B$, the edge-set of $b_i$ is denoted by $e(b_i)$. We denote that $e(b^O_i)=e(b^i)\cap e(B^O)$ and $e(b^T_i)=e(b_i)\cap e(B^T)$. Then we have the following description of a branch.
\begin{prop}
Denote the graph induced by the edge-set $e(b_i)$ by $G[e(b_i)]$. Each edge in $e(b^T_i)$ is a cut edge of $G[e(b_i)]$, and any edge in $e(b^O_i)$ is not a cut edge of $G[e(b_i)]$.
\end{prop}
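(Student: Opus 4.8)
The plan is to prove the two halves of the statement separately; the assertion about $e(b^O_i)$ is immediate from the definition of a branch, while the assertion about $e(b^T_i)$ carries all the content. For $e(b^O_i)$: by the definition of a branch, the subgraph of $G[e(b_i)]$ spanned by the once-covered edges is a union of cycles, so every $e=v_rv_s\in e(b^O_i)$ lies on a cycle $C$ of $G[e(b_i)]$ and $C-e$ is a path from $v_r$ to $v_s$ avoiding $e$. Since $b_i$ is a closed walk, $G[e(b_i)]$ is connected; deleting $e$ therefore leaves $v_r$ and $v_s$ (hence all vertices) in one component, so $e$ is not a cut edge.

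For $e(b^T_i)$ I would go back to the construction $b_i=f_i-\bigcup_j h_{i_j}$, where $H_{1i}=\{h_{i_j}\}$ is the family of cycles picked out of the fork $f_i$. Deleting one sheet of a cycle only drops the covering multiplicity of its edges from two to one, so $G[e(b_i)]=G[e(f_i)]$, the $h_{i_j}$ are edge-disjoint (an edge lying in two of them would have both sheets removed), $e(b^O_i)=\bigcup_j e(h_{i_j})$, and $e(b^T_i)=e(f_i)\setminus\bigcup_j e(h_{i_j})$. The key claim is that $H_{1i}$ exhausts the cycles of $G[e(f_i)]$, i.e. every cycle of $G[e(b_i)]$ has all of its edges in $\bigcup_j e(h_{i_j})=e(b^O_i)$. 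Granting this, no edge of $e(b^T_i)$ lies on a cycle of the connected graph $G[e(b_i)]$, and an edge of a connected graph lying on no cycle is a cut edge, which is exactly what is wanted; so everything reduces to the key claim.

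To establish the key claim I would lift to $\tilde G$ and reuse the mechanism of Propositions \ref{prop4.4} and \ref{prop4.7}: $\tilde b_i=\tilde f_i-\bigcup_j\tilde h_{i_j}$ is a circuit, hence has no cut edge, since a cut edge would split off a subgraph containing exactly one odd vertex while odd vertices occur only in pairs. A twice-covered edge $e$ of $b_i$ lifts to two edges $e^1\subseteq G_1$ and $e^2\subseteq G_2$ both lying in $\tilde b_i$, while $\tilde f_i\cap G_2=\tilde b_i\cap G_2$ projects isomorphically onto $G[e(b_i)]$. If some $e\in e(b^T_i)$ lay on a cycle $C$ of $G[e(b_i)]$, lifting $C$ through the symmetric $G_2$-copy would exhibit a cycle of $\tilde f_i$ through $e^2$ none of whose edges had been removed, and tracking how such a cycle must interact with the removed $\tilde h_{i_j}$ (using that the $h_{i_j}$ were taken maximal among edge-disjoint cycles of the fork) should yield a contradiction. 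I expect this last step — deducing from the fork/segment structure that the picked cycles $H_{1i}$ genuinely cover every cycle of $G[e(f_i)]$, so that the twice-covered edges form the ``tree part'' of the branch with cycles attached only along $e(b^O_i)$ — to be the main obstacle; once it is in place, the equivalence ``cut edge $\iff$ on no cycle'', the connectivity of $G[e(b_i)]$, and the easy direction are all routine.
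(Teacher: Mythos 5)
Your reduction is the right one, and your first half (the once-covered edges form a union of cycles by the definition of a branch, hence lie on cycles of the connected graph $G[e(b_i)]$ and are not cut edges) matches the paper, which simply dismisses that direction as obvious. The genuine gap is that you never prove the key claim you correctly isolate --- that no cycle of $G[e(b_i)]$ contains an edge of $e(b^T_i)$ --- and the route you sketch for it would not close it. You propose to read $H_{1i}$ as a maximal family of edge-disjoint cycles of the fork and then derive a contradiction by lifting to $\tilde G$; but maximality of an edge-disjoint family does not imply that every edge lying on a cycle is covered by the family (three internally disjoint paths between two vertices give three pairwise edge-sharing cycles, of which a maximal edge-disjoint subfamily contains only one, leaving cycle edges in $e(b^T_i)$). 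The lifting facts you invoke ($\tilde b_i$ is a circuit, hence has no cut edge; $\tilde f_i\cap G_2$ projects isomorphically) do not by themselves control which edges downstairs remain doubly covered, and you explicitly flag the decisive step as an expected ``main obstacle'' rather than carrying it out.

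The paper closes exactly this point without any lifting, by appealing to the construction of $H_{1i}$ in Step 4: $H_{1i}$ is defined to contain one sheet of \emph{each} cycle contained in the edge-set of the fork $f_i$, so if an edge $e\in e(b^T_i)$ lay on a cycle $C$ of $G[e(b_i)]=G[e(f_i)]$, that cycle would already be one of the $h_{i_j}$, and $e$ would be covered at most once in $b_i=f_i-\cup_j h_{i_j}$, contradicting $e\in e(b^T_i)$. In other words, the content you were trying to re-derive inside $\tilde G$ is, in the paper's setup, definitional. (Your unease is not misplaced: the paper's recipe ``pick out one sheet for each cycle'' is itself under-specified when cycles of the fork share edges, which is precisely the configuration your edge-disjoint reading tries to legislate away; but as written the paper's proof of this proposition rests on that definition, and your version, which does not, is left with its main step unproven.)
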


\begin{proof}
It is obvious that any edge in $e(b^O_i)$ is not a cut edge of $G[e(b_i)]$. We only need to prove the former assertion. If not, there is a non-cut edge $e$ in $e(b^T_i)$, which in located on a fork in $b_i$. Then $e$ is a part of a cycle $C$ in $b_i$. Such a cycle $C$ is also a cycle in the corresponding fork $f_i$, hence must be in $H_i$. So the cover of $e$ is at most one-sheet in $b_i$. It is a contradiction to the assumption that $e$ is an edge in $e(b^T_i)$.
\end{proof}

This proposition provides another description of a branch $b\in B$.
\begin{prop}\label{prop4.10}
A branch $b\in B$ admits a closed walk on a connected subgraph of $G$, which pass the edges on a circuit once and pass all the cut edges twice.
\end{prop}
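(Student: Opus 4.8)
The plan is to read Proposition \ref{prop4.10} as a repackaging of facts already in hand: Proposition \ref{prop4.7} (a branch is a closed walk whose two-sheets lifting is a single circuit in $\tilde G$), the preceding proposition (the twice-covered edges of a branch are exactly the cut edges of its induced graph, and the once-covered edges exactly the non-cut edges), and the definition of a branch (the once-covered edges induce a union of cycles). Almost nothing new is needed; the task is to display this structure in the stated form.

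First I would settle the ``connected subgraph'' clause. For a branch $b=b_i=f_i-\bigcup_j h_{i_j}$ attached to a maximal fork $f_i\in F$, the proof of Proposition \ref{prop4.7} gives that the two-sheets lifting $\tilde b=\tilde f_i-\bigcup_j\tilde h_{i_j}$ is a circuit in $\tilde G$; a circuit is connected, so its projection $G[e(b)]=p(\tilde b)$ is a connected subgraph of $G$ on which $b$ is a closed walk. Next, writing $e(b)=e(b^O)\cup e(b^T)$ as in the paragraph before the statement (distinct branches are edge-disjoint, arising from edge-disjoint forks, so the split is unambiguous), the preceding proposition says precisely that $e(b^T)$ is the cut-edge set of $G[e(b)]$; since $b$ covers every edge of $e(b^T)$ twice, this is the clause ``pass all the cut edges twice''.

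It remains to see that the once-covered edges form the circuit part that $b$ traverses once. I would check that $G[e(b^O)]$ has all degrees even: at a vertex $v$ the closed walk $b$ uses an even number of edge-slots counted with multiplicity, and that number equals $\deg_{G[e(b^O)]}(v)+2\cdot\#\{\text{edges of }e(b^T)\text{ incident to }v\}$, so $\deg_{G[e(b^O)]}(v)$ is even. Hence each component of $G[e(b^O)]$ is connected with all even degrees, thus carries an Eulerian circuit; i.e.\ the once-covered part of $b$ is a circuit (in the flexible, possibly disconnected sense already adopted for the set $R$) each of whose edges $b$ passes once. Assembling: $G[e(b)]$ is connected, its cut edges $e(b^T)$ form a forest joining the pieces carved out by $e(b^O)$, and $b$ enters each such piece, runs once around its Eulerian circuit, and backtracks along the cut edges, doubling each of them --- the ``fork over segments'' picture at the macroscopic level. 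I expect the only friction to be bookkeeping: keeping the multiplicity count honest at the bifurcation vertices of $f_i$, and reconciling the singular ``a circuit'' in the statement with the possibility that $G[e(b^O)]$ has several $2$-edge-connected components, both of which are absorbed by the even-degree computation above together with the broad use of the word ``circuit''.
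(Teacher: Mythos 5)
Your proof is correct and takes the route the paper intends: the paper states this proposition with an empty proof, treating it as an immediate repackaging of Proposition \ref{prop4.7}, the preceding cut-edge proposition, and the definition of a branch, and these are exactly the ingredients you assemble. The one piece of genuinely new content you supply --- the parity count showing every vertex of $G[e(b^O)]$ has even degree, so the once-covered part decomposes into Eulerian circuits --- is a correct and reasonable way to justify the ``circuit'' clause that the paper leaves implicit.
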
\qed

So a branch consists of some circuits and some forks. Circuits in a branch is connected by some forks and there are some other forks that connect to a circuit and do not connect any other circuit in the branch. Visually speaking, the forks connected two circuits in a branch like some ``bridges", and the other forks are ``growing" from circuits. We call a maximal fork in a branch the \emph{branch fork}. A branch fork is a double cover of connected cut edges in the branch. The ending vertices of a branch fork are called the \emph{$f_b$-ending vertices}. An $f_b$-ending vertex of a branch fork is either connected to a circuit in the branch, or is an ending vertex of the branch.

We denote the lifting of a branch $b_i$ by $\tilde b_i$, which is defined to be the complement of $\tilde H_i$ in $\tilde f_i$, namely, $\tilde b_i=\tilde f_i-\cup_j\tilde h_{i_j}$. $\tilde b_i$ is a circuit in $\tilde G$. A fork in $b_i$ has a double cover by $\tilde b_i$, so its lifting is a two-sheets lifting in $\tilde b_i$. A circuit in $b_i$ has a single cover by $\tilde b_i$, so its lifting in $\tilde b_i$ is a one-sheet lifting. Therefore, for a branch $b_i\in B$, we call $\tilde b_i$ is its \emph{mixed-sheet lifting}. The union of such circuits $\tilde b_i$ gives a \emph{mixed-sheet lifting set} of $B$, denoted by $\tilde B$.

So we have the decomposition
\begin{eqnarray}\label{equ4.2}
E=L\cup R\cup D\cup H_1\cup B,
\end{eqnarray}
where $L$, $R$, $D$, $H_1$ can be regarded as sets of cycles and $B$ is a set of branches. Moreover,
\begin{eqnarray}\label{equ4.3}
\tilde E=\tilde L\dot\cup \tilde R\dot\cup\tilde D\dot\cup \tilde H_1\dot\cup \tilde B,
\end{eqnarray}
where $\tilde L$, $\tilde R$, $\tilde D$, $\tilde H_1$, $\tilde B$ are all sets of circuits in $\tilde G$, and $\tilde L$, $\tilde R$, $\tilde D$, $\tilde H_1$ are the one-sheet liftings of $L$, $R$, $D$, $H_1$, respectively, while $\tilde B$ is a mixed-sheet lifting of $B$.

If $\tilde B$ is a one-sheet lifting of $B$, we finish the proof by providing the double cycle cover $\tilde E=\tilde L\dot\cup \tilde R\dot\cup\tilde D\dot\cup \tilde H_1\dot\cup \tilde B$.\\

\emph{Step 5:} Generally speaking, $\tilde B$ may not be a one-sheet lifting of $B$. Before we reduce the mixed-sheet lifting set $B$ to some one-sheet lifting set, we turn to the cycles in $L\cup R\cup D\cup H_1$.

Consider the set of edges in $L\cup R\cup D\cup H_1$, denoted by $e(LRDH)$. The graph $G[e(LRDH)]$ induced by $e(LRDH)$ can be separated into several disconnected subgraphs. That is,
$$G[e(LRDH)]=\dot\cup_k G^{LRDH}_{k},$$
where $G^{LRDH}_{k}$ is a disconnected subgraph of $G[e(LRDH)]$ for each $k$. We now connect some cycles together in $L\cup R\cup D\cup H_1$, whose edges are in the same connected induced subgraph. Namely, any cycles $r_1,r_2,\cdots,r_i$ are connected if there is an index $k$ such that $r_j\subseteq G^{LRDH}_{k}$ for any $j\in\{1,2\cdots,i\}$, and there is no any other $r_p\in L\cup R\cup D\cup H_1$, $p\notin \{1,2,\cdots,i\}$, satisfies this property. Thus, we separate the set of cycles $L\cup R\cup D\cup H_1$ into several disconnected subsets,
$$L\cup R\cup D\cup H_1=\cup_i ((L\cup R\cup D\cup H_1)\cap G^{LRDH}_{i})=:\cup_i (L\cup R\cup D\cup H_1)_i.$$
More precisely, we denote the cycles in $L\cup R\cup D\cup H_1$ by $r_1,r_2,\cdots,r_m$. They are separated to several classes, each of which is a set of cycles in the same connected induced subgraph. That is,
\begin{eqnarray*}
\begin{split}
\{r_1,r_2,\cdots,r_m\}&=\{r_{1_1},r_{1_2},\cdots,r_{1_{p_1}}\}\cup\cdots\cup\{r_{k_1},r_{k_2},\cdots,r_{k_{p_k}}\}\\
&=\cup_i\{r_{i_1},r_{i_2},\cdots,r_{i_{p_i}}\},
\end{split}
\end{eqnarray*}
where $p_i$ is an index with $p_i\geq 1$ for each $1\leq i\leq k$, and $k$ is an index satisfying $1\leq k\leq m$. For each $i$, the set of cycles given by $$\{r_{i_1},r_{i_2},\cdots,r_{i_{p_i}}\}=(L\cup R\cup D\cup H_1)\cap G^{LRDH}_{i}=:(L\cup R\cup D\cup H_1)_i,$$
is called a \emph{connected class}.
The one-sheet lifting set of a connected class $\{r_{i_1},r_{i_2},\cdots,r_{i_{p_i}}\}$ is defined by the union of the one-sheet liftings of its elements $r_{i_1},r_{i_2},\cdots,r_{i_{p_i}}$. Precisely, denoting the one-sheet lifting in $\tilde L\dot\cup \tilde R\dot\cup\tilde D\dot\cup \tilde H_1$ of each $r_{i_j}$ by $\tilde r_{i_j}$, and denoting the lifting set of each connected class $(L\cup R\cup D\cup H_1)_i$ by $(\tilde L\dot\cup \tilde R\dot\cup \tilde D\dot\cup\tilde H_1)_i$, we can get
$$(\tilde L\dot\cup \tilde R\dot\cup \tilde D\dot\cup \tilde H_1)_i=\{\tilde r_{i_1},\tilde r_{i_2},\cdots,\tilde r_{i_{p_i}}\},$$
and
$$\tilde L\dot\cup \tilde R\dot\cup \tilde D\dot\cup \tilde H_1=\cup_i(\tilde L\dot\cup \tilde R\dot\cup \tilde D\dot\cup \tilde H_1)_i.$$
So we get the decomposition that
\begin{eqnarray}\label{equ4.4}
\tilde E=(\cup_i(\tilde L\dot\cup \tilde R\dot\cup \tilde D\dot\cup \tilde H_1)_i)\dot\cup \tilde B,
\end{eqnarray}
and its projection
\begin{eqnarray}\label{equ4.5}
E=(\cup_i(L\cup R \cup D\cup H_1)_i)\cup B.
\end{eqnarray}
(\ref{equ4.4}) and (\ref{equ4.5}) are rearrangements of (\ref{equ4.3}) and (\ref{equ4.2}), respectively. One can find that the lifting set $(\tilde L\dot\cup \tilde R\dot\cup \tilde D\dot\cup \tilde H_1)_i$ of a connected class $(L\cup R\cup D\cup H_1)_i$ may not be connected in $\tilde G$. A connected class $(L\cup R\cup D\cup H_1)_i$ can cover some edges in its edge-set (or the induced graph $G^{LRDH}_{i}$ of its edge-set) twice.

Let's consider the connection of any two different cycles in $(\tilde L\dot\cup \tilde R\dot\cup \tilde D\dot\cup \tilde H_1)_i$. Suppose $r_1$ and $r_2$ are two cycles in $(\tilde L\dot\cup \tilde R\dot\cup \tilde D\dot\cup \tilde H_1)_i$, and $r_1\neq r_2$. Denote the union of $r_1$ and $r_2$ by $r=r_1\cup r_2$. Let $P=r_1\cap r_2$ be the graph of the intersection of $r_1$ and $r_2$. $P$ is a set of common edges and vertices.

There are a pair of vertices $(v_1,v_2)$ such that $v_1,v_2\in P$ and there are two paths $p_1\in r_1-P$, $p_2\in r_2-P$ connected $v_1$ and $v_2$, respectively. Such pair of vertices exist when $r_1$ and $r_2$ are not coincident. Such vertices pair $(v_1,v_2)$ may not be unique. Moreover, $v_1$ might be the same vertex of $v_2$, which implies $v_1=v_2$ is the only common part, i.e., $P=\{v=v_1=v_2\}$.

Let $r'_1=r_1-p_1$, $r'_2=r_2-p_2$ be two paths in cycles $r_1$ and $r_2$, respectively. Let $Q=r'_1\cup r'_2$ be the graph containing the common part $P=r_1\cap r_2$. $P=Q=\{v\}$, when $v_1=v_2=v$. Any two paths from $v_1$ to $v_2$ in $Q$ may share some common vertices. Hence $Q$ is a union of cycles and segments.
For any vertices $a\in p_1$ and $b\in p_2$, we have
\begin{prop}\label{prop4.11}
For any vertices $a\in p_1$ and $b\in p_2$, there exist two paths $l_1$ and $l_2$ from $a$ to $b$ such that $l_1\cup l_2=r_1\cup r_2$.
\end{prop}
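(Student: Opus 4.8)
\emph{Sketch.} The plan is to build $l_1$ and $l_2$ by cutting the two ``ears'' $p_1,p_2$ at the prescribed points $a,b$ and then crossing the four pieces over through the common core $Q$, routing one of the two new paths through $r_1'$ and the other through $r_2'$. Concretely: since $a$ lies on $p_1$, write $p_1=\alpha\cup\beta$, where $\alpha$ is the subpath of $p_1$ from $v_1$ to $a$ and $\beta$ the subpath from $a$ to $v_2$; likewise write $p_2=\gamma\cup\delta$, with $\gamma$ from $v_1$ to $b$ and $\delta$ from $b$ to $v_2$. Then take $l_1$ to be the walk that runs from $a$ back to $v_1$ along $\alpha$, across to $v_2$ along $r_1'$, and out to $b$ along $\delta$; and take $l_2$ to be the walk from $a$ to $v_2$ along $\beta$, across to $v_1$ along $r_2'$, and out to $b$ along $\gamma$. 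Everything then reduces to two checks: that $l_1$ and $l_2$ are honest vertex-simple paths, and that $l_1\cup l_2=r_1\cup r_2$ as sets of edges.

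The edge-set identity is the easy half. The edges of $l_1$ are exactly those of $\alpha\cup r_1'\cup\delta$ and the edges of $l_2$ those of $\beta\cup r_2'\cup\gamma$, so
$$l_1\cup l_2=(\alpha\cup\beta)\cup(r_1'\cup r_2')\cup(\gamma\cup\delta)=p_1\cup Q\cup p_2;$$
and since $r_1=p_1\cup r_1'$ and $r_2=p_2\cup r_2'$, the right-hand side is precisely $r_1\cup r_2$.

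The simplicity of $l_1$ (that of $l_2$ being symmetric) is where the real work sits, and it is pure bookkeeping about how $p_1$ and $p_2$ lie inside $r_1$ and $r_2$. I would use that $p_1$ is an arc of the cycle $r_1$ internally disjoint from $P=r_1\cap r_2$ (this is how the $p_i$ were chosen): its interior vertices then lie outside $r_2$ — hence outside $r_2'\subseteq Q$ and outside $\delta\subseteq p_2$ — and also outside $r_1'$, the complementary arc of $r_1$, which meets $p_1$ only at $\{v_1,v_2\}$. Consequently the three legs of $l_1$ are pairwise vertex-disjoint except at the junctions $v_1,v_2$, while $r_1'$, being a subpath of the cycle $r_1$, passes through $v_1$ and $v_2$ only as its endpoints; listing the vertices of $l_1$ in order — $a$, then the interior vertices of $\alpha$, then $v_1$, then the interior vertices of $r_1'$, then $v_2$, then the interior vertices of $\delta$, then $b$ — one reads off that no vertex repeats (here one takes $a$ interior to $p_1$ and $b$ interior to $p_2$). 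The degenerate case $v_1=v_2=:v$, in which $P=Q=\{v\}$ and $r_1\cup r_2$ is a ``figure eight'' at $v$, I would handle separately: split the cycle $r_1$ at $a$ and $v$ into arcs $A_1,A_2$ and $r_2$ at $b$ and $v$ into arcs $B_1,B_2$, and take $l_1=A_1\cup B_1$, $l_2=A_2\cup B_2$; the same two verifications go through, and faster. The one thing to be careful about throughout is exactly this vertex-disjointness accounting in the simplicity check — the same flavour of argument as in the proof of Proposition \ref{prop4.4} — and nothing deeper than that is needed.
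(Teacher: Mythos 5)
Your construction is exactly the paper's: the paper likewise splits $r_1$ at $v_1,v_2,a$ and $r_2$ at $v_1,v_2,b$ and sets $l_1=p_1(a,v_1)\cup Q_1(v_1,v_2)\cup p_2(v_2,b)$ and $l_2=p_2(b,v_1)\cup Q_2(v_1,v_2)\cup p_1(v_2,a)$, which coincide with your $\alpha\cup r_1'\cup\delta$ and $\beta\cup r_2'\cup\gamma$. The proposal is correct, and in fact more careful than the paper, which only checks the edge-set identity and omits both the vertex-disjointness verification and the degenerate case $v_1=v_2$.
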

\begin{proof}
Vertices $v_1,v_2$ and $a$ separate the cycle $r_1$ into three paths. Namely, the path in $p_1$ from $a$ to $v_1$ denoted by $p_1(a, v_1)$, the path in $Q\cap r_1$ from $v_1$ to $v_2$ denoted by $Q_1(v_1,v_2)$, and the path in $p_1$ from $v_2$ to $a$ denoted by $p_1(v_2,a)$. Analogously, the three vertices $v_1,v_2$ and $b$ separate $r_2$ into three paths. We denote them by $p_2(b,v_1)$ the path in $p_2$ from $b$ to $v_1$, $Q_2(v_1,v_2)$ the path in $Q\cap r_2$ from $v_1$ to $v_2$, and $p_2(v_2,b)$ the path in $r_2$ from $v_2$ to $b$. Then the two paths $l_1$ and $l_2$ can be given by
$$l_1=p_1(a, v_1)\cup Q_1(v_1,v_2)\cup p_2(v_2,b),\quad l_2=p_2(b,v_1)\cup Q_2(v_1,v_2)\cup p_1(v_2,a).$$
It is obvious that
\begin{eqnarray*}
\begin{split}
l_1\cup l_2&=p_1(a, v_1)\cup Q_1(v_1,v_2)\cup p_2(v_2,b)\cup p_2(b,v_1)\cup Q_2(v_1,v_2)\cup p_1(v_2,a)\\
&=(p_1(a, v_1)\cup Q_1(v_1,v_2)\cup p_1(v_2,a))\cup (p_2(b,v_1)\cup Q_2(v_1,v_2)\cup p_2(v_2,b))\\
&=r_1\cup r_2.
\end{split}
\end{eqnarray*}
\end{proof}
For any vertices $a\in p_1$ and $b\in r_2-P\cap r_2-p_2$, we have
\begin{prop}\label{prop4.12}
For any vertices $a\in p_1$ and $b\in r_2-P\cap r_2-p_2$, there exist two paths $l_1$ and $l_2$ from $a$ to $b$ such that $r_1\cup r_2-l_1\cup l_2$ is a cycle.
\end{prop}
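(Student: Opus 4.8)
The plan is to mirror the construction in Proposition \ref{prop4.11}, now with the second endpoint $b$ lying on the arc $r'_2$ rather than on $p_2$. The one thing to fix at the outset is the bookkeeping convention: exactly as for the identity $l_1\cup l_2=r_1\cup r_2$ in Proposition \ref{prop4.11}, I read ``$r_1\cup r_2$'' as the closed walk that traverses $r_1$ once and then $r_2$ once, so that the common part $P=r_1\cap r_2$ is covered twice and every other edge once, and ``$l_1\cup l_2$'' as the circuit obtained by running $l_1$ and then $l_2$ reversed. Under this reading the leftover will be a genuine cycle; read as a bare edge-set it would not even be a cycle, and that apparent failure is the only thing that makes the statement look delicate.

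First I would import the decomposition set up before Proposition \ref{prop4.11}: the common vertices $v_1,v_2\in P$, the arc $p_1\in r_1-P$ joining them, the arc $p_2\in r_2-P$ joining them, $r'_1=r_1-p_1$ and $r'_2=r_2-p_2$ (both arcs from $v_1$ to $v_2$), and $P\subseteq r'_1\cap r'_2$. Since $b\in r_2-P$ and $b\notin p_2$, the vertex $b$ is interior to the arc $r'_2$, so $v_1,b,v_2$ split $r_2$ into $p_2$, $r'_2(v_1,b)$ and $r'_2(b,v_2)$ with $r'_2(v_1,b)\cup r'_2(b,v_2)=r'_2$; and since $a\in p_1$, the vertices $v_1,a,v_2$ split $r_1$ into $p_1(v_1,a)$, $p_1(a,v_2)$ and $r'_1$.

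Then I would take
$$l_1=p_1(a,v_1)\cup r'_2(v_1,b),\qquad l_2=p_1(a,v_2)\cup r'_2(v_2,b),$$
the two routes $a\to v_1\to b$ and $a\to v_2\to b$. These are simple paths from $a$ to $b$: the interior of $p_1$ lies in $r_1-P$ and so misses $r_2$ entirely, hence $p_1(a,v_i)$ meets $r'_2(v_i,b)$ only at $v_i$, and $r'_2(v_1,b),r'_2(b,v_2)$ meet only at $b$; one also checks $l_1\cap l_2=\{a,b\}$. As edge multisets, $l_1\cup l_2=p_1+r'_2$ (the two $p_1$-pieces reassemble $p_1$, the two $r'_2$-pieces reassemble $r'_2$, and $p_1\cap r'_2=\emptyset$ because $p_1\cap P=\emptyset$), while $r_1\cup r_2=p_1+r'_1+p_2+r'_2$. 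Subtracting gives $r_1\cup r_2-l_1\cup l_2=r'_1+p_2$. Since $r'_1$ and $p_2$ are arcs from $v_1$ to $v_2$ with $r'_1\cap p_2\subseteq r_1\cap r_2=P$ and $p_2\cap P=\emptyset$, they meet only at $\{v_1,v_2\}$, so $r'_1\cup p_2$ is connected and $2$-regular, i.e.\ a cycle. That is the required leftover.

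I do not expect a real obstacle beyond the degenerate configurations, which are handled as in Proposition \ref{prop4.11}: if $P$ is connected — in particular if $v_1=v_2$, so $P=\{v_1=v_2\}$ — then $r'_1=r'_2=P$ and there is no admissible $b$, so the claim is vacuous; if $b$ is adjacent to $v_1$ or to $v_2$, or if $a$ is an endpoint of $p_1$, one of the sub-arcs degenerates to a single vertex but the displayed formulas and the conclusion survive verbatim. The only other point worth spelling out is the connectedness of the leftover $r'_1+p_2$, noted above, which is what makes it one cycle rather than a disjoint union of cycles.
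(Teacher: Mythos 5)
Your proposal is correct and follows essentially the same route as the paper's own proof: both split $r_1$ at $a,v_1,v_2$ and $r_2$ at $b,v_1,v_2$, take $l_1,l_2$ to be the routes $a\to v_1\to b$ and $a\to v_2\to b$ (the paper writes the latter as $b\to v_2\to a$, i.e.\ $Q_2(b,v_2)\cup p_1(v_2,a)$), and identify the leftover as $r'_1\cup p_2$ (the paper's $Q_1(v_1,v_2)\cup p_2$), a cycle because $p_2$ is edge-disjoint from $Q$. Your additional care with the multiset bookkeeping and the degenerate configurations goes slightly beyond what the paper records, but the construction is identical.
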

\begin{proof}
As in the proof of Proposition \ref{prop4.11}, vertices $v_1,v_2$ and $a$ separates the cycle $r_1$ into three paths $p_1(a, v_1)$, $Q_1(v_1,v_2)$, and $p_1(v_2,a)$. The three vertices $v_1,v_2$ and $b$ separates $r_2$ into three paths. We denote them by $Q_2(b,v_2)$ the path in $Q\cap r_2$ from $b$ to $v_2$, $p_2$ the path from $v_2$ to $v_1$ in $p_2$, and $Q_2(v_1,b)$ the path in $Q\cap r_2$ from $v_1$ to $b$. Then the two paths $l_1$ and $l_2$ can be given by
$$l_1=p_1(a, v_1)\cup Q_2(v_1,b),\quad l_2=Q_2(b,v_2)\cup p_1(v_2,a).$$
It is obvious that
\begin{eqnarray*}
\begin{split}
r_1\cup r_2&=p_1(a, v_1)\cup Q_1(v_1,v_2)\cup p_1(v_2,a)\cup Q_2(b,v_2)\cup p_2\cup Q_2(v_1,b)\\
&=(p_1(a, v_1)\cup Q_2(v_1,b))\cup (Q_2(b,v_2)\cup p_1(v_2,a))\cup (Q_1(v_1,v_2)\cup p_2)\\
&=l_1\cup l_2\cup (Q_1(v_1,v_2)\cup p_2).
\end{split}
\end{eqnarray*}
Equivalently,
$$r_1\cup r_2-l_1\cup l_2=(Q_1(v_1,v_2)\cup p_2),$$
which is a cycle, since $p_2$ and $Q$ have no common edge.
\end{proof}
It follows directly from Propositions \ref{prop4.11} and \ref{prop4.12} that
\begin{prop}\label{prop4.13}
Suppose $r_1$ and $r_2$ are two distinct cycles connected to each other. The common subgraph of $r_1,r_2$ is denoted by $P=r_1\cap r_2$. For any two vertices $a\in r_1-P$ and $b\in r_2-P$. There is two paths $l_1,l_2$ from $a$ to $b$ such that
$$l_1\cup l_2=r_1\cup r_2-\delta,$$
where $\delta$ is a cycle or an empty graph.
\end{prop}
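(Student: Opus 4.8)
The plan is to read off the statement from Propositions~\ref{prop4.11} and~\ref{prop4.12} after choosing the decomposition $r_1=p_1\cup r_1'$, $r_2=p_2\cup r_2'$ of the discussion preceding Proposition~\ref{prop4.11} in a way adapted to the given vertices $a$ and $b$. Since $a\in r_1-P$, the vertex $a$ lies on a unique maximal sub-arc of the cycle $r_1$ having no edge in $P$; I take that arc to be $p_1$, so that $a\in p_1$, let $v_1,v_2\in P$ be its end vertices, and set $r_1'=r_1-p_1$. The pair $(v_1,v_2)$ cuts $r_2$ into two arcs; I take $p_2$ to be one of them that contains no edge of $P$, put $r_2'=r_2-p_2$ and $Q=r_1'\cup r_2'$, so that the full setup of Proposition~\ref{prop4.11} is in force with our $a$.

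The argument is then a dichotomy on the position of $b$. If $b$ lies on $p_2$, then $a\in p_1$ and $b\in p_2$, so Proposition~\ref{prop4.11} applies verbatim and gives paths $l_1,l_2$ from $a$ to $b$ with $l_1\cup l_2=r_1\cup r_2$; here $\delta$ is the empty graph. If $b$ does not lie on $p_2$, then, since $b\in r_2-P$, the vertex $b$ lies on $r_2'$ and not on $P$, which is exactly the hypothesis $b\in r_2-P\cap r_2-p_2$ of Proposition~\ref{prop4.12}; that proposition then yields $l_1,l_2$ from $a$ to $b$ with $r_1\cup r_2-l_1\cup l_2$ a cycle, and we take this cycle to be $\delta$. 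If the arc through $a$ and the arc through $b$ force incompatible end vertices, one first interchanges the roles of $r_1$ and $r_2$ (and of $a$ and $b$) and then runs the same dichotomy; in both orientations the conclusion has the required form, since $\delta$ is either empty or a single cycle.

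The step I expect to be the main obstacle is the claim, used implicitly above, that $p_2$ can be chosen $P$-free once $p_1$ is fixed through $a$ (and likewise after the symmetric switch): equivalently, that at least one of the two $r_2$-arcs between $v_1$ and $v_2$ carries no edge of $P$. When $P$ is connected this is automatic — then $r_1-P$ and $r_2-P$ are single arcs and one is always in the first case with $\delta$ empty — but when $P$ has several components interleaved along $r_1$ and $r_2$ it can fail, and then neither proposition applies as stated. To cover that residual configuration I would induct on $|E(P)|$: choose a component $\pi$ of $P$ sitting between the relevant arcs, glue $\pi$ to suitable sub-arcs of $r_1$ and of $r_2$ so as to split off a single cycle $\delta_0\subseteq r_1\cup r_2$ for which $(r_1\cup r_2)-\delta_0$ is again a union of two cycles — one through $a$, one through $b$ — sharing strictly fewer edges, apply the inductive hypothesis to that pair, and then verify that $\delta_0$ merges with the resulting $\delta$ into a single cycle (or disappears). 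Controlling this last merge so that the discarded part stays one cycle is the delicate point. Alternatively, since every edge of $r_1\cup r_2$ lies on $r_1$ or on $r_2$ and hence on a cycle, $r_1\cup r_2$ has no bridge, so one can bypass the case analysis by taking a closed ear decomposition of $r_1\cup r_2$ whose initial cycle passes through both $a$ and $b$ and assembling $l_1$ and $l_2$ from the successive ears.
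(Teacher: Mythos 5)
Your route is the same one the paper takes: the paper offers no argument for Proposition~\ref{prop4.13} at all beyond the phrase ``it follows directly from Propositions~\ref{prop4.11} and~\ref{prop4.12}'', and your dichotomy on the position of $b$ is exactly that intended reduction. The obstacle you flag in your third paragraph is genuine, and it is worth making concrete. Let $r_1=xymzwnx$ and $r_2=xypwzqx$, so that $P$ consists of the two interleaved edges $xy$ and $zw$. The $P$-free arcs of $r_1$ join the pairs $(y,z)$ and $(w,x)$, while those of $r_2$ join $(y,w)$ and $(z,x)$; no pair $(v_1,v_2)$ admits $P$-free arcs $p_1\subseteq r_1$ and $p_2\subseteq r_2$ simultaneously, so even the setup preceding Proposition~\ref{prop4.11} (which asserts such a pair always exists) breaks down, and neither proposition applies. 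Even in the milder interleaved case where the pair $(v_1,v_2)$ does exist but $b$ sits on the ``wrong'' arc, the identity $r_1\cup r_2=l_1\cup l_2\cup(Q_1(v_1,v_2)\cup p_2)$ in the proof of Proposition~\ref{prop4.12} holds only as a union, not a disjoint union: the edges of $P$ lie both in $l_1\cup l_2$ (through the $Q_2$ pieces) and in $Q_1(v_1,v_2)$, so the set difference $r_1\cup r_2-l_1\cup l_2$ comes out as a disjoint union of paths rather than a cycle. So you have correctly located a real hole that the paper simply steps over.

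However, your proposal does not close that hole. The induction on $|E(P)|$ is only sketched, and you yourself name the unresolved step (arranging that the split-off cycle $\delta_0$ merges with the inductively obtained $\delta$ into a single cycle or disappears); nothing in the write-up shows this can always be done. The ear-decomposition alternative is likewise a single sentence with no construction of $l_1$ and $l_2$ from the ears. The statement does appear to survive the example above --- e.g.\ for $a=m$, $b=p$ one can take $l_1=myxqzwp$ and $l_2=mzwnxyp$ with $\delta=\emptyset$ --- but such paths must weave back and forth between $r_1$ and $r_2$ and may share edges with one another, which is precisely the behaviour Propositions~\ref{prop4.11} and~\ref{prop4.12} never produce. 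Until the interleaved configurations are actually handled, Proposition~\ref{prop4.13} remains unproved, in your write-up and in the paper alike; your contribution relative to the paper is the (correct) diagnosis, not a repair.
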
\qed\\

\emph{Step 6:} Obviously, all the ending vertices of a fork $f_i\in F$ are contained in the corresponding branch $b_i=f_i-\cup_jh_{i_j}\in B$. According to the definitions, all the branches $\{b_i\}$ are disconnected at their ending vertices mutually. That is, any ending vertex of a branch $b_1$ is not any vertex of another branch $b_2$.

The projection $E$ of $\tilde E$ is a double cover of $G$, and the degree of each vertex in $G$ is at least 2. Hence, each ending vertex $u_{i_l}$ of a branch $b_i\in B$ must be passed by some cycles in $L\cup R\cup D\cup H_1$. Equivalently, any ending vertex of a branch $b\in B$ must connect to a cycle from one connected class. Noticing that a branch $b\in B$ is maximal and any two connected class are disconnected, one can assert that any $f_b$-ending vertex of a branch fork is connected to a cycle, and any two $f_b$-ending vertices are connected to each other by the union of some segments and circuits.

Denote the set of branch forks by $BF$. Suppose a branch $b_i\in B$ contains some forks in $BF$, which form a set denoted by $b_i\cap BF$. Noticing that the complement of $b_i\cap BF$ in $b_i$ is some disconnected circuits, which can be decomposed into some cycles. Such cycles form a set denoted by $H_{2i}$. When we run over all the branch $b_i\in B$, we get a set of cycles $H_2=\cup_i H_{2i}$. The complement of $H_2$ in $B$ is the set of branch forks $BF$. We now have
\begin{eqnarray}\label{equ4.6}
E=(\cup_i(L\cup R\cup B\cup H_1)_i)\cup H_2\cup BF,
\end{eqnarray}
where $H_2$ is the set of cycles contained in $B$ and $BF$ is the set of branch forks. One may notice that the set $H_2$ is the copy of $H_1$. By the closeness of $BF$, we have the decomposition of $\tilde E$ that
\begin{eqnarray}\label{equ4.7}
\tilde E=(\cup_i(\tilde L\dot\cup \tilde R\dot\cup \tilde D\dot\cup \tilde H_1)_i)\dot\cup \tilde H_2\dot\cup\tilde{BF},
\end{eqnarray}
where $\tilde H_2$ is a one-sheet lifting of $H_2$ in $G_2$, and $\tilde{BF}$ is a set of circuits in $\tilde G$. However, $H_2$ and $H_1$ are two sets of cycles in $G$, which form the double cycle cover of their induced graphs $G[e(H_1)]=G[e(H_2)]$.

Setting $H=H_1\cup H_2$ and $\tilde H=\tilde H_1\dot\cup \tilde H_2$, we can represent (\ref{equ4.6}) and (\ref{equ4.7}) by
\begin{align*}
E&=(\cup_i(L\cup R\cup B\cup H)_i)\cup BF,\\
\tilde E&=(\cup_i(\tilde L\dot\cup \tilde R\dot\cup \tilde D\dot\cup \tilde H)_i)\dot\cup\tilde{BF}.
\end{align*}

Since a fork is a union of some maximal segments by connecting one ending vertex of a segment to an inner vertex of another segment, the set $BF$ contains finitely many segments. We consider the set $BF$ as a set of segments now, and denote the number of maximal segments in $BF$ by $n$. The cycle double cover conjecture has a positive answer if $BF=\emptyset$, or $n=0$. A maximal segment in $BF$ must be one of the following cases:
\begin{itemize}
  \item[(A):] neither of its two ending vertices is an inner vertex of another segment.
  \item[(B):] at least one of its ending vertices is an inner vertex of another segment.
\end{itemize}
\begin{prop}\label{prop4.14}
$BF$ must contain some maximal segments of type (A), unless $BF=\emptyset$.
\end{prop}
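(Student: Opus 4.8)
The plan is to argue by contradiction using a finiteness/extremality argument on the segments constituting $BF$. Suppose $BF\neq\emptyset$ but every maximal segment of $BF$ is of type (B), i.e. each has at least one ending vertex that is an inner vertex of some other segment of $BF$. I would first recall that, by the construction of $F$ and hence of $BF$, the maximal forks have already absorbed every configuration in which an ending vertex of one constituent segment sits at an inner vertex of another; so within a single maximal branch fork $f_b$ this kind of incidence is precisely what produces the bifurcation vertices, whose bifurcation degree is exactly three by Proposition \ref{prop4.10}'s preceding proposition. The key structural fact to extract is therefore: inside one branch fork $f_b$, the induced graph $G[e(f_b)]$ is a tree (each edge is a cut edge, covered twice), and the ending vertices of $f_b$ are exactly the degree-one vertices (leaves) of that tree, while the segments of $BF$ lying in $f_b$ correspond to the root-to-something paths one gets when decomposing the tree.

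Next I would run the extremality argument on the tree. Take any maximal branch fork $f_b$ and consider its induced tree $T=G[e(f_b)]$. A finite tree with at least one edge has at least two leaves. Each leaf of $T$ is an $f_b$-ending vertex of the branch fork, hence (as established just before this proposition, using that $E$ is a double cover and every vertex of $G$ has degree $\geq 2$, together with maximality of the branch and disjointness of connected classes) is connected to a cycle of some connected class and is \emph{not} an inner vertex of any segment of $BF$ — an inner vertex of a segment has bifurcation/path-degree $2$ in the relevant induced graph, whereas a leaf of $T$ has degree one there. Now pick a leaf $u$ of $T$ and the unique segment $s\in BF$ that has $u$ as an ending vertex. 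If the \emph{other} ending vertex of $s$ is also a leaf of $T$ (equivalently, not an inner vertex of any segment), then $s$ is of type (A), contradiction. Otherwise the other ending vertex $w$ of $s$ is an inner vertex of another segment, which by the bifurcation-degree-three fact means $w$ is a bifurcation vertex; but then chase along $T$ away from $u$: since $T$ is a finite tree, following the segments through bifurcation vertices must terminate at another leaf $u'$, and the segment $s'$ ending at $u'$ cannot have its far end be an inner vertex of any further segment without creating a vertex of bifurcation degree $\geq 4$ or producing an infinite descent in the finite tree $T$. Hence $s'$ is of type (A).

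Finally I would note that this last chase is really the heart of the matter and phrase it cleanly as: in the tree $T$, consider a longest path; its two endpoints are leaves, and the two segments of $BF$ terminating at those leaves cannot have their opposite ends extendable (doing so would lengthen the path in $T$ or violate bifurcation degree three), so both are of type (A). This also yields the slightly stronger statement that $BF$ contains at least two type-(A) segments per branch fork, though only the nonemptiness is needed here.

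\medskip

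The main obstacle I anticipate is making precise the correspondence between ``segment of $BF$'' and ``path in the tree $T=G[e(f_b)]$'' in the presence of repeated vertices: a segment's inner vertices may coincide with bifurcation vertices of neighbouring segments, so one must be careful that ``ending vertex of $s$ is an inner vertex of $s'$'' translates to a genuine degree-$2$-versus-degree-$\geq 3$ dichotomy in the right induced graph, and that the ``longest path in $T$'' argument is not disrupted by the walk revisiting vertices. Invoking Proposition \ref{prop4.10} (each cut edge of a branch is covered twice, the once-covered part being a union of cycles) and the bifurcation-degree-three proposition should be enough to keep the tree structure rigid and force the extremal leaves to give type-(A) segments.
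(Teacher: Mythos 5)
Your central ``chase'' argument is essentially the paper's proof. The paper assumes every maximal segment of $BF$ is of type (B), builds a maximal chain $s_1,s_2,\dots,s_k$ in which an ending vertex of $s_i$ is an inner vertex of $s_{i+1}$, observes that maximality of $k$ forces the chain to close up onto some earlier $s_j$, and then exhibits a cycle inside $s_j\cup\dots\cup s_k$ consisting entirely of doubly covered edges of one branch, contradicting Proposition \ref{prop4.10}. Your version runs the same finiteness argument in the contrapositive direction: the acyclicity of the doubly covered part (again Proposition \ref{prop4.10}) prevents the chain from looping, so it must terminate at a type-(A) segment. These are the same idea, and your middle paragraph is a correct proof.

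However, the ``longest path'' reformulation that you single out as the heart of the matter is false, as is the claimed bonus that each branch fork contains at least two type-(A) segments. Take the tree $T$ with edges $u_1a$, $u_3a$, $ab$, $bu_2$, $bu_4$, and let the maximal segments have underlying paths $u_3\,a\,b\,u_4$ (call it $s_1$), $u_1\,a$ (call it $s_2$) and $u_2\,b$ (call it $s_3$). This is a legitimate branch fork: no two segments share an ending vertex, the segments are pairwise edge-disjoint, and both bifurcation vertices $a,b$ have bifurcation degree three. Here $s_1$ is the unique type-(A) segment, while $s_2$ and $s_3$ are type (B). The walk $u_1\,a\,b\,u_2$ is a longest path of $T$, yet the segments terminating at its two leaf endpoints are $s_2$ and $s_3$, both of type (B): a segment attached at a leaf can perfectly well have its far end sitting as an inner vertex of another segment without lengthening any path of $T$ or raising any bifurcation degree above three, because the longest path and the segment decomposition need not be aligned. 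So the extremal object you want is not ``a longest path of $T$'' but ``a maximal chain of segments,'' which is exactly what the paper uses; your proof should rest on the chase paragraph and discard the longest-path shortcut.
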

\begin{proof}
If not, $BF\neq\emptyset$ only contains maximal segments of type (B). Consider an arbitrary maximal segment $s_1\in BF$, one of whose ending vertices is an inner vertex of another maximal segment $s_2\in BF$. One of the ending vertices of the maximal segment $s_2$ is an inner vertex of another maximal segment $s_3\in BF$. Since the number of segments in $BF$ is finite, we can get a sequence of mutually different maximal segments $s_1,s_2,\cdots,s_k\in BF$ with $k$ being the largest index, such that one of the ending vertices of $s_i$ is an inner vertex of $s_{i+1}$. Since $s_k$ is also type (B), one of its ending vertices is an inner vertex of one maximal segment in $\{s_1,s_2,\cdots,s_{k-1}\}$, e.g $s_j$. Otherwise we can extend the sequence to be $s_1,s_2,\cdots,s_{k+1}\in BF$, which contradicts to the assumption that $k$ is the largest index. Now we have find a cycle in $s_j\cup s_{j+1}\cup\cdots\cup s_k$ whose edges are all covered twice in a branch, which is impossible according to Proposition \ref{prop4.10}.
\end{proof}

Let's consider the connection of a segment $s\in BF$ and a cycle $r$, when the intersection of $s$ and $r$ is only a vertex $v$. Such a cycle is chosen from $\cup_i(L\dot\cup R\dot\cup D\dot\cup H)_i$ for some $i$, and $v$ is one of the ending vertex of $s$ connected to $r$. So we have
\begin{prop}\label{prop4.15}
Suppose a maximal segment $s$ and a cycle $r$ are connected at a vertex $v$. There are two paths $l_1$ and $l_2$ from the ending vertex $a\neq v$ of $s$ to an arbitrary vertex $b\neq v$ in $r$, such that $l_1\cup l_2=s\cup r$.
\end{prop}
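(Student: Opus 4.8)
The plan is to follow the concatenation technique of Propositions \ref{prop4.11} and \ref{prop4.12}, now exploiting the standing hypothesis that the segment $s$ and the cycle $r$ meet only in the vertex $v$, which is moreover an ending vertex of $s$. Write $\pi$ for the path underlying $s$; then $\pi$ runs from $a$ to $v$, the fork $s$ traverses each edge of $\pi$ exactly twice, and $a$ is the ending vertex of $s$ other than $v$. Because $s\cap r=\{v\}$ and $b\neq v$, the vertex $b$ lies on $r$ but not on $s$; in particular $b\neq a$ and $b\notin\pi$.

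First I would cut the cycle $r$ at its two vertices $v$ and $b$ into two arcs $\alpha$ and $\beta$, each a path from $v$ to $b$, with $\alpha\cap\beta=\{v,b\}$ and $\alpha\cup\beta=r$. I would then set
$$l_1=\pi\cup\alpha,\qquad l_2=\pi\cup\beta,$$
each read as a walk that goes from $a$ along $\pi$ to $v$ and then along the chosen arc to $b$. Since $\pi$ ends at $v$, each arc starts at $v$, and $\pi$ meets $r$ (hence $\alpha$ and $\beta$) only at $v$, the walks $l_1$ and $l_2$ are vertex-simple, so they are genuine paths from $a$ to $b$.

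It remains to check the identity $l_1\cup l_2=s\cup r$ at the level of edge multiplicities, which is a routine count: every edge of $\pi$ occurs once in $l_1$ and once in $l_2$, hence twice in $l_1\cup l_2$, matching the double traversal of $\pi$ by $s$; every edge of $r$ lies in exactly one of $\alpha,\beta$ and in no copy of $\pi$, hence occurs once in $l_1\cup l_2$; and $s,r$ share no edge because $s\cap r=\{v\}$. Reading $l_1$ forward and $l_2$ backward exhibits $l_1\cup l_2$ as the closed walk ``out along $\pi$, once around $r$, back along $\pi$'', which is precisely $s\cup r$. The only step requiring genuine care is the vertex-simplicity of $l_1,l_2$: it is exactly here that the hypothesis $s\cap r=\{v\}$ is indispensable, since a common vertex $w\neq v$ of $\pi$ and $r$ would be visited twice by one of the two walks. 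Granting that hypothesis, everything else is the same bookkeeping as in the proofs of Propositions \ref{prop4.11}--\ref{prop4.13}.
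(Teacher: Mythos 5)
Your construction is exactly the paper's: the paper writes the segment as two coincident paths $p_1=p_2$ from $a$ to $v$, splits $r$ at $v$ and $b$ into arcs $p_3,p_4$, and sets $l_1=p_1\cup p_3$, $l_2=p_2\cup p_4$, which matches your $\pi\cup\alpha$ and $\pi\cup\beta$. Your additional checks of edge multiplicities and vertex-simplicity (using $s\cap r=\{v\}$) are correct and merely make explicit what the paper leaves implicit.
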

\begin{proof}
The segment $s$ gives two paths $p_1=p_2$ from $a$ to $v$. Vertices $v$ and $b$ separate the cycle $r$ into two paths $p_3$, $p_4$. So we can set $l_1=p_1\cup p_3$ and $l_2=p_2\cup p_4$ to finish the proof.
\end{proof}

We pick out one maximal segment $s$ of type (A) in $BF$. The ending vertices of $s$ are called $\alpha$, $\beta$. The segment $s$ provides two paths $p_1=p_2$ from $\alpha$ to $\beta$. We have
\begin{prop}
There are another two paths $p_3,p_4$ from $\alpha$ to $\beta$ in $E$, such that $p_m$ has no any common edge with $p_1=p_2$, for $m=3,4$.
\end{prop}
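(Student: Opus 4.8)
The plan is to delete the segment $s$ from the double cover $E$ and recover the two new paths from the parity structure of what remains. Put $E'=E-s$, i.e. the closed walk obtained from $E$ by removing the out‑and‑back excursion along $s$. Since $s$ covers every edge of its underlying path $p_1=p_2$ exactly twice and covers no other edge, $E'$ covers every edge of $G$ not lying on $p_1$ exactly twice, and covers no edge of $p_1$. In the multigraph picture every vertex of $G$ has even degree in $E'$: at an interior vertex $v$ of $s$ the degree drops from $2d_v$ to $2d_v-4$, at $\alpha$ and $\beta$ it drops to $2(d_\alpha-1)$ and $2(d_\beta-1)$, and elsewhere it is unchanged. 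As $G$ is bridgeless we have $d_\alpha,d_\beta\geq 2$, so $\alpha$ and $\beta$ still have positive even degree in $E'$; hence each connected component of $E'$ is a closed eulerian subgraph, and having all degrees even it is bridgeless, so every two of its vertices are joined by two edge‑disjoint paths.

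The key step is to show that $\alpha$ and $\beta$ lie in the \emph{same} connected component $K$ of $E'$. Here I would use the structure already assembled before the statement: $s$ is a branch fork, and by the discussion preceding Proposition~\ref{prop4.14} each of its $f_b$‑ending vertices lies on a cycle of some connected class $(L\cup R\cup D\cup H)_i$; say $\alpha$ lies on a cycle of the class $(L\cup R\cup D\cup H)_{i_\alpha}$ and $\beta$ on a cycle of $(L\cup R\cup D\cup H)_{i_\beta}$. Every cycle occurring in any connected class, and every branch fork other than $s$, is edge‑disjoint from $p_1$ (the edges of $p_1$ are already covered twice, by $s$ alone), hence is a subgraph of $E'$ and therefore lies inside a single component of $E'$. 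Since $e(E)=E(G)$ is connected and $E$ is a single closed walk that traverses $s$ as one contiguous out‑and‑back excursion, deleting that excursion leaves a single closed walk $E'$ still visiting $\alpha$ and $\beta$; equivalently, the classes $(L\cup R\cup D\cup H)_i$ and the remaining branch forks are globally connected in $G\setminus e(p_1)$, so $\alpha$ and $\beta$ are not separated. Thus $\alpha,\beta\in K$.

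With $\alpha,\beta$ in the same bridgeless component $K$ of $E'$, Menger's theorem (or an ear decomposition of $K$) yields two edge‑disjoint paths $p_3,p_4$ from $\alpha$ to $\beta$ inside $K$. Since $K\subseteq E'$ and $E'$ uses no edge of $p_1=p_2$, neither $p_3$ nor $p_4$ shares an edge with $p_1=p_2$, which is the assertion; moreover $p_3$ and $p_4$ may be taken edge‑disjoint from each other, which is what the subsequent argument will want.

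The main obstacle is the connectivity claim in the middle paragraph — that $\alpha$ and $\beta$ stay in one component once the doubled path $p_1$ is removed. This is exactly where the bridgelessness of $G$ together with the precise way $s$ sits in the decomposition $E=\bigl(\bigcup_i(L\cup R\cup D\cup H)_i\bigr)\cup BF$ must be used: in a general $2$‑edge‑connected graph the edges of a path can separate its two endpoints, so one cannot conclude from $2$‑edge‑connectivity of $G$ alone. One must exploit that $e(p_1)$ is covered only by $s$ (so every cycle and every other branch fork survives in $E'$), that $\alpha$ and $\beta$ each attach to a surviving cycle, and that the whole collection of surviving cycles and branch forks is connected because $E$ is a single eulerian closed walk covering all of $G$. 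The remaining steps (parity, "bridgeless $\Rightarrow$ two edge‑disjoint paths") are routine.
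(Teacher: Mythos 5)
You take a genuinely different route for the second half of the argument: the paper does not delete $s$ and appeal to parity, but instead first produces a single path $p$ from $\alpha$ to $\beta$ avoiding $e(p_1)$, gathers the cycles and segments that $p$ meets into a closed walk $P(\alpha,\beta)$, and then inductively applies Propositions \ref{prop4.13} and \ref{prop4.15} to split $P(\alpha,\beta)$ into two paths plus some leftover cycles. Your parity-plus-Menger argument on the even subgraph $E-s$ is cleaner and even yields $p_3,p_4$ edge-disjoint from each other, which the paper does not claim; that part of your proposal is sound once $\alpha$ and $\beta$ are known to lie in one component of $E-s$.

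The genuine gap is precisely at the step you yourself flag as the main obstacle, and the justification you offer for it does not work. You argue that $\alpha$ and $\beta$ remain in one component of $E'=E-s$ because ``$E$ is a single closed walk that traverses $s$ as one contiguous out-and-back excursion,'' so that deleting the excursion leaves a single closed walk through both vertices. But the decomposition $E=(\cup_i(L\cup R\cup D\cup H)_i)\cup BF$ is a decomposition of edge multisets (covering multiplicities), not of contiguous subwalks of the eulerian circuit: the two traversals of an edge of $p_1$ need not be adjacent in the walk, and $s$ was assembled in Step 2 by regrouping doubly-covered edges coming from possibly different pieces $m_i$ of $\tilde E-\tilde L$ and then concatenating segments across different $m_i$. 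So there is no contiguous excursion to delete, and the claim that $\alpha$ and $\beta$ stay connected in $G-e(p_1)$ still needs a real argument. As you correctly note, it does not follow from $2$-edge-connectivity alone (in $K_4$ minus an edge, deleting the edges of the length-three path between the two nonadjacent vertices separates them), so it must be extracted from the special position of $s$: its ending vertices are odd, $e(p_1)$ is covered only by $s$, hence every cycle of every connected class and every other branch fork lies entirely on one side of any $\alpha$--$\beta$ cut consisting of $p_1$-edges. The paper's own treatment of this point is also thin (it dismisses the bad case by saying $s$ would be ``a bridge in $E$''), so you have not lost ground relative to the paper; but your specific justification, the contiguity of the excursion, is the one step in your writeup that is false as stated rather than merely unproved, and it is the step on which everything else rests.
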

\begin{proof}
Firstly, we claim that there must be another path $p$ from $\alpha$ to $\beta$ in $E$ besides $p_1$ and $p_2$. If not, $p_1=p_2$ is the only path from $\alpha$ to $\beta$ in $E$, which means that the induced graph of the segment $s$ is a bridge in $E$.

Moreover, $p$ has no any common edge with $p_1=p_2$ since the segment $s$ is the double cover of the path $p_1=p_2$. $p$ must pass some maximal segments $\{s_j\}_{j=1}^a$  (chosen from some branch forks) and cycles $\{r_i\}_{i=1}^b$ (chosen from $\cup_i(L\cup R\cup D\cup H)_i$), where the segments are in $BF-s$ and the cycles are in $E-BF$. If an edge is covered by two different cycles, we only choose one of them up to $\{r_i\}_{i=1}^b$. More precisely, each edge in $p$ is only covered by $\{r_i\}_{i=1}^b$ once, if the edge is not covered by some segments in $\{s_j\}_{j=1}^a$. It always can be done since $E$ is connected and is a double cover of $G$. We can connect them to get a closed walk $P(\alpha,\beta)=(\cup_i r_i)\cup (\cup_j s_j)$. $p$ is only cover a connected part of $P(\alpha,\beta)$ once. By applying Propositions \ref{prop4.13} and \ref{prop4.15} inductively, we can get a sequence of cycles $r^c_i$, and two paths $p_3,p_4$ from $\alpha$ to $\beta$, such that $p_3\cup p_4\cup (\cup_{i}r^c_i)=P(\alpha,\beta)$. $p_m$ $(m=3,4)$ has no common edge with $p_1=p_2$ because the union of paths $P(\alpha,\beta)\cup p_1\cup p_2$ in $E$ at most can cover an edge in $G$ twice.
\end{proof}
Therefore, we can get two distinct circuits
$$c_1=p_1\cup p_3,\quad c_2=p_2\cup p_4,$$
which can be decomposed into some cycles.
We replace the segments $s$, $\{s_j\}_{j=1}^a$ and cycles $\{r_i\}_{i=1}^b$ by cycles in $c_1$ and $c_2$ in (\ref{equ4.6}) to get a new decomposition of $E$. It is obvious that the number of branch forks strictly decline. The set of cycles in $\cup_i(L\cup R\cup D\cup H)_i$ has been changed. The connected class of cycles may need to be rearranged. The point are that the number of segments $BF$ is reduced, and the structure of forks and connected class are preserved. Some of the maximal segments of type (A) have been replaced, however, some maximal segments of type (B) turn to type (A) in the process because the segments at their ending vertices become cycles. By Proposition \ref{prop4.14}, there always exist maximal segments of type (A) in $BF$ before all the maximal segments have been replaced by some cycles. We can continue this process until $BF=\emptyset$.

\qed\\

\section*{Acknowledgements}
The author want express great appreciate to Doctor Chenli Shen for his passionate introduction on this conjecture.
This work is supported by the Natural Science Foundation of Jiangsu Province (No. BK20160661) and supported in part by NSFC (No. 11371386).

Bin Shen \\
Department of Mathematics, Southeast University, 211189
Nanjing, P. R. China\\
E-mails: shenbin@seu.edu.cn\\

\end{document}